\definecolor{ashgrey}{rgb}{0.7, 0.75, 0.71}
\definecolor{oxfordblue}{rgb}{0.0, 0.13, 0.28}
\definecolor{armygreen}{rgb}{0.29, 0.33, 0.13}
\definecolor{bulgarianrose}{rgb}{0.28, 0.02, 0.03}
\definecolor{carnelian}{rgb}{0.7, 0.11, 0.11}
\definecolor{lapislazuli}{rgb}{0.15, 0.38, 0.61}
\definecolor{mediumelectricblue}{rgb}{0.01, 0.31, 0.59}
\newtheorem{thm}{Theorem}[section]
\newtheorem{cor}[thm]{Corollary}
\newtheorem{prop}[thm]{Proposition}
\theoremstyle{definition}
\newtheorem{dfn}[thm]{Definition}
\theoremstyle{remark}
\newtheorem{rem}[thm]{Remark}
\numberwithin{equation}{subsection}
\newtheorem{Q}[thm]{Question}
\newtheorem{ex}[thm]{Example}
\newtheorem{con}[thm]{Conventions}
\newcommand{\midarrow}{\tikz \draw[-triangle 45] (0,1) -- +(.5,0);}
\newcommand{\al}{\alpha}
\newcommand{\la}{\lambda}
\newcommand{\ka}{\kappa}
\newcommand{\w}{\omega}
\newcommand{\CC}{\mathcal{C}}
\newcommand{\M}{\overline{M}}
\newcommand{\OO}{\mathcal{O}}
\newcommand{\CF}{\mathcal{F}}
\newcommand{\CH}{\mathcal{H}}
\newcommand{\E}{\mathbb{E}}
\newcommand{\F}{\mathbb{F}}
\newcommand{\G}{\mathcal{G}}
\newcommand{\J}{\mathcal{J}}
\newcommand{\LL}{\mathbb{L}}
\newcommand{\QQ}{\mathbb{Q}}
\newcommand{\Z}{\mathbb{Z}}
\newcommand{\D}{\qquad}
\newcommand{\too}{\rightarrow}
\newcommand{\W}{\mathcal{W}}
\begin{document}

\title[Tautological classes on the moduli space $\CH_{g,n}^{rt}$]
{Tautological classes on the moduli space of hyperelliptic curves with rational tails}
\author[M. Tavakol]{Mehdi Tavakol}
   \address{Korteweg de Vries Instituut voor Wiskunde Universiteit van Amsterdam}
   \email{m.tavakol@uva.nl}
   
\begin{abstract}
We study tautological classes on the moduli space of stable $n$-pointed hyperelliptic curves of genus $g$ with rational tails.
Our result gives a complete description of tautological relations.
The method is based on the approach of Yin in comparing tautological classes on the moduli of curves 
and the universal Jacobian.
It is proven that all relations come from the Jacobian side.
The intersection pairings are shown to be perfect in all degrees.
We show that the tautological algebra coincides with its image in cohomology via the cycle class map. 
The latter is identified with monodromy invariant classes in cohomology.
The connection with recent conjectures by Pixton is also discussed.
\end{abstract}   

\maketitle

\tableofcontents

\section*{Introduction}
In this article we study tautological classes on the moduli space 
$\CH_{g,n}^{rt}$ of stable 
$n$-pointed hyperelliptic curves of genus $g$ with rational tails. 
Tautological classes are natural algebraic cycles reflecting the nature of the generic object parameterized by the moduli space.
The set of generators consists of an explicit collection of cycles.
In particular, tautological groups are finite dimensional vector spaces.
This distinguishes remarkably the tautological ring from the out of reach space of all algebraic cycles. 
A basic question regarding tautological algebras is to give a meaningful class of tautological relations. 

Our strategy in studying tautological classes on $\CH_{g,n}^{rt}$ is to look at the fibers of the projection 
$\pi:\CH_{g,n}^{rt} \too \CH_g$.
The reduced fiber of $\pi$ over a moduli point $[X] \in \CH_g$ 
corresponding to a smooth hyperelliptic curve $X$ is the 
Fulton-MacPherson compactification $X[n]$ of the configuration space of $n$ points on $X$.
There is a natural way to view the tautological ring of 
$X[n]$ as an algebra over the tautological ring $R^*(X^n)$ of the cartesian product $X^n$ of $X$. 
Basic relations among tautological classes on $X^n$ are obtained.
These relations are divided into 3 parts:

\begin{itemize}
\item
The vanishing of the Faber-Pandharipande cycle,

\item
The vanishing of the Gross-Schoen cycle,

\item
A relation of degree $g+1$ involving $2g+2$ points.
\end{itemize}

We show that all these relations can be obtained from relations on the universal Jacobian $\J_g$ over the space $\CH_g$. 
Our argument in proving the first two vanishings depends strongly on the fact that we are working with hyperelliptic curves. 
The last relation has a different nature and holds over any family of smooth curves of genus $g$.
We will give two independent arguments to prove this relation.
Yin has pointed out that the degree $g+1$ relation can be obtained from the vanishing of a certain Chow motive as well.

From our results we get a complete description of tautological relations on the moduli space $\CH_{g,n}^{rt}$. 
We will see that the structure of the tautological algebra is determined by studying tautological classes on the fiber $X[n]$:
\begin{thm}\label{X}
Let $X$ be a fixed hyperelliptic curve of genus $g$.
The tautological ring of the moduli space $\CH_{g,n}^{rt}$ is naturally isomorphic to the tautological ring of the fiber $X[n]$.
In particular, the intersection pairings are perfect in all degrees.
\end{thm}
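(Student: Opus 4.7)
The plan is to reduce the theorem to facts about the fiber $X[n]$ that are (by construction) already encoded in the earlier sections of the paper, then promote the fiberwise picture to one over the base $\CH_g$. Concretely, the isomorphism will be induced by the inclusion $\iota\colon X[n]\into \CH_{g,n}^{rt}$ of the (reduced) fiber of $\pi$ over the moduli point $[X]$; the content is that the pullback map $\iota^*\colon R^*(\CH_{g,n}^{rt})\to R^*(X[n])$ is both surjective and injective.

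For surjectivity I would check that the standard generators of $R^*(X[n])$ (the restrictions of the $\psi$-classes, the diagonal classes $K_i$ and $\Delta_{ij}$ on $X^n$ pulled back along $X[n]\to X^n$, and the boundary classes of the Fulton--MacPherson compactification) are all restrictions of tautological classes on $\CH_{g,n}^{rt}$ defined in the same way on the universal curve. This is essentially a naturality statement and should follow from the definitions; no geometric obstruction is expected.

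The main work is injectivity. Here I would argue as follows. The presentation of $R^*(X[n])$ achieved in the body of the paper shows that every relation among the tautological generators on $X[n]$ is a consequence of (i) the vanishing of the Faber--Pandharipande cycle, (ii) the vanishing of the Gross--Schoen cycle, and (iii) the degree $g+1$ relation on $X^n$, together with the tautological relations coming from the blow-up construction of $X[n]$. By the main technical results of the paper, each of these three relations is the restriction to the fiber of a relation already valid on the universal space (via comparison with the universal Jacobian $\J_g\to \CH_g$ and the vanishing of the corresponding motivic class). Therefore the ideal of tautological relations on $\CH_{g,n}^{rt}$ surjects onto that of $X[n]$ under $\iota^*$, while the generator sides match by surjectivity. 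Thus $\iota^*$ is an isomorphism of graded rings. The hardest point, and the one I expect to require the most care, is verifying that no ``global'' relation exists on $\CH_{g,n}^{rt}$ whose fiberwise restriction is trivial yet which is not itself zero; this monodromy/variation-of-Hodge-structure type issue is precisely what is controlled by the comparison with $\J_g$, and this is where I would invoke the earlier sections most heavily.

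Finally, the perfect pairing statement follows formally once the isomorphism $R^*(\CH_{g,n}^{rt})\cong R^*(X[n])$ is in hand. Since $X$ is a fixed smooth projective hyperelliptic curve, $X[n]$ is smooth and projective of dimension $n$, so it carries a fundamental class and Poincar\'e duality on cohomology. It therefore suffices to exhibit a top socle generator in $R^n(X[n])$ and show that the pairing $R^d(X[n])\times R^{n-d}(X[n])\to R^n(X[n])\cong\QQ$ is non-degenerate on the explicit generating set from Step~1. Using the presentation obtained in the previous step, this amounts to a finite computation in each degree, dualizing the chosen monomial basis against the socle; the fact that the top class is one-dimensional and that the tautological ring has an explicit presentation reduce this to a direct, if mildly tedious, verification.
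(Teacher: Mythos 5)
Your strategy for the isomorphism itself is the paper's: restrict along $F\colon X[n]\to \CH_{g,n}^{rt}$, note that both rings are generated by the same divisors so $F^*$ is surjective, and get injectivity by exhibiting a complete generating set for the relations ideal of $R^*(X[n])$ and lifting each generator to a relation on $\CH_{g,n}^{rt}$. Two caveats there. First, your worry about a ``global'' relation killed by restriction to the fiber, to be handled by monodromy or variation of Hodge structure, is a red herring: once you know a generating set for the relations of $R^*(X[n])$ and that each generator holds upstairs, injectivity is a purely algebraic consequence (both rings are quotients of the same polynomial algebra on the divisor generators), and no monodromy input is used or needed at this point in the paper. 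Second, the lifting step is not only about the Faber--Pandharipande, Gross--Schoen and degree $g+1$ relations; the relations coming from the blow-up construction (Keel's Chern polynomial relations, the transversality relations among exceptional divisors) must also be verified on $\CH_{g,n}^{rt}$, and the paper spends most of Section 9 showing these are formal consequences of the vanishing $D_I\cdot D_J=0$ for overlapping, non-nested $I,J$. You cannot simply wave at ``naturality'' here.

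The genuine gap is in your treatment of the perfect pairing. Poincar\'e duality for the smooth projective variety $X[n]$ gives nondegeneracy of the pairing on $H^*(X[n])$, but says nothing about its restriction to the tautological subring of the \emph{Chow} ring; indeed the paper points out that $R^*(X^2)$ already fails Poincar\'e duality for a generic non-hyperelliptic curve of genus $\geq 4$, so smoothness and projectivity cannot be the reason. Moreover, ``a finite computation in each degree'' is not a proof of the statement for all $n$: the nondegeneracy must be established uniformly. The paper's actual argument introduces standard monomials and a duality $v\mapsto v^*$, a filtration $F^pR^*(X[n])$ making the intersection matrix block-triangular with diagonal blocks equal to the pairing matrices of $R^*(X^{|S|})$, and then identifies the kernels of those blocks via the Hanlon--Wales eigenvalue analysis of the $S_{2m}$-representations $R_{2m}=\oplus_\lambda V_\lambda$: a block $V_\lambda$ lies in the kernel exactly when $\lambda$ has a part of length $\geq 2g+2$, which is precisely the span of the degree $g+1$ relations. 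This representation-theoretic step is the substance of the theorem and is absent from your proposal.
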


Everything mentioned above concerns tautological classes in Chow.
The Gorenstein property of $R^*(\CH_{g,n}^{rt})$ implies 
the same results in cohomology.
This shows that there is no difference between Chow and cohomology as long as we restrict to tautological classes.
Using a result of Petersen and Tommasi, 
which was our motivation for this project, we prove the following:

\begin{cor}\label{M}
The cycle class map induces an isomorphism between the tautological ring  of 
the moduli space $\CH_{g,n}^{rt}$ in Chow and monodromy invariant classes in cohomology.
\end{cor}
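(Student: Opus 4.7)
The plan is to combine Theorem \ref{X} with the theorem of Petersen--Tommasi mentioned in the introduction, by factoring the cycle class map through the subspace of monodromy invariant cohomology classes and then proving injectivity and surjectivity separately.

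First I would observe that the image of the cycle class map $cl:R^*(\CH_{g,n}^{rt})\to H^{2*}(\CH_{g,n}^{rt})$ automatically lies in the monodromy invariant part. Indeed, tautological classes on $\CH_{g,n}^{rt}$ are defined by universal constructions relative to the family $\pi:\CH_{g,n}^{rt}\to\CH_g$, so their restrictions to the cohomology of a fiber $X[n]$ are preserved by the action of $\pi_1(\CH_g)$. This gives a factorization of $cl$ through $H^{2*}_{\mathrm{inv}}(\CH_{g,n}^{rt})$.

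Next I would prove injectivity using the Gorenstein property. Suppose $\alpha\in R^*(\CH_{g,n}^{rt})$ satisfies $cl(\alpha)=0$. Since the cycle class map is compatible with proper push-forward and the degree of a top-dimensional class in Chow agrees with the integral of its cohomology class, we obtain $\deg(\alpha\cdot\beta)=0$ for every tautological class $\beta$ of complementary degree. Theorem \ref{X} asserts that the intersection pairing on $R^*(\CH_{g,n}^{rt})$ is perfect in all degrees, so $\alpha=0$.

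Finally, surjectivity onto the monodromy invariant part is exactly what the Petersen--Tommasi result is designed to provide: the monodromy invariant cohomology of $\CH_{g,n}^{rt}$ is spanned by tautological classes. Combined with the injectivity of $cl$, this upgrades the inclusion $cl(R^*)\subseteq H^{2*}_{\mathrm{inv}}$ to an equality, yielding the claimed isomorphism. The main obstacle I anticipate is precisely in this final step, namely reconciling the description of the monodromy invariants furnished by Petersen--Tommasi with the explicit set of tautological generators used in the present paper; the first two steps are essentially formal consequences of Theorem \ref{X}, but matching the two sets of generators and checking that no further monodromy invariant class slips outside the image of $cl$ requires unwinding their construction against ours on a hyperelliptic fiber.
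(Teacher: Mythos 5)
Your proposal follows essentially the same route as the paper: injectivity of the cycle class map from the perfect intersection pairing of Theorem \ref{X}, and surjectivity onto the monodromy invariants from the Petersen--Tommasi result. The only point worth noting is that the paper locates the monodromy invariants in $H^*(X[n])^{\mathfrak{Sp}(2g,\QQ)}$ for a fixed fiber $X$ and passes to $\CH_{g,n}^{rt}$ via the isomorphism $F^*$ of Theorem \ref{X}, which is exactly the reconciliation you flag at the end as the remaining obstacle.
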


At the end we discuss the connection between the relations on the space $\CH_{g,n}^{rt}$ and Pixton's relations on $\M_{g,n}$.

\begin{con}
We consider algebraic cycles modulo rational equivalence.
All Chow groups are taken with $\QQ$-coefficients. 
\end{con}

\vspace{+10pt}
\noindent{\bf Acknowledgments.}
I am grateful to all my colleagues who made suggestions and corrections on the preliminary version of this note.

I would like to thank Carel Faber, Gerard van der Geer, Richard Hain, 
Robin de Jong, Nicola Pagani, 
Aaron Pixton and Orsola Tommasi for the valuable discussions and their comments.
Thanks to Felix Janda for sending me the notes on Pixton's relations on products of the universal curve, 
answering many questions in that direction and his comments.
Special thanks are due to Qizheng Yin for explaining several aspects 
of the theory developed in his thesis and his comments and corrections.    
Several parts of this research was carried out during my stay at the 
Max-Planck-Institut f\"ur Mathematik at Bonn in 2013.
This research was completed in the group of Sergey Shadrin at the KdV Instituut voor Wiskunde
at the university of Amsterdam. 
Thanks to Sergey Shadrin for his interest in this project and his supports.
I would like to thank both institutes for their supports.

\section{Tautological classes on the space of hyperelliptic curves}

Let $\CH_{g,n}^{rt}$ be the space of stable $n$-pointed hyperelliptic curves of genus $g$ with rational tails. 
It is a quasi-projective variety of dimension $2g-1+n$.
It parameterizes objects of the form $(C;x_1 , \dots , x_n)$, 
where $C$ is stable hyperelliptic with $n$ distinct smooth points 
$x_i$ for $i=1, \dots , n$.
We assume that $C$ is a reduced nodal curve of arithmetic genus $g$ with exactly one component of genus $g$. 
For each rational component of the curve the markings and nodes are called special points. 
By the stability condition we require that all rational components have at least 3 special points. 
As a result each object of the corresponding moduli problem has finitely many 
automorphisms and the resulting stack is of Deligne-Mumford type.

Tautological classes on $\CH_{g,n}^{rt}$ are defined as natural algebraic cycles on the moduli space.
Let $\pi:\CC \too \CH_g$ be the universal hyperelliptic curve of genus $g$ and denote by $\w$ its relative dualizing sheaf.
Its class in the Picard group of $\CC$ is denoted by $K$.
Denote by $\CC^n$ the $n$-fold fiber product of $\CC$ over $\CH_g$.  
We define $K_i:=\pi_i^*(K) \in A^1(\CC^n)$, 
where $\pi_i:\CC^n \too \CC$ is the projection onto the $i^{th}$ factor for every $1 \leq i \leq n$.
Its pull-back via the contraction map $\CH_{g,n}^{rt} \too \CC^n$ is denoted by the same letter.  
Tautological classes on $\CH_{g,n}^{rt}$ come from the classes $K_i$ and those supported on the boundary of the partial compactification $\CH_{g,n}^{rt}$. 
Recall that for each subset $I$ of the marking set $\{1, \dots , n\}$ 
having at least 2 elements there is a boundary divisor class $D_I$ in Pic$(\CH_{g,n}^{rt})$.
It corresponds to those nodal curves having 2 components.
According to our definition one component is of genus $g$ and the other component is rational. 
The index set $I$ refers to the markings on the rational component.
We now define the tautological ring of the moduli space:

\begin{dfn}
The tautological ring of $\CH_{g,n}^{rt}$ is the $\QQ$-subalgebra of the rational Chow ring 
$A^*(\CH_{g,n}^{rt})$ of $\CH_{g,n}^{rt}$ generated by the divisor classes 
$K_i$ for $i=1, \dots, n$ and the boundary divisors $D_I$ for subsets $I \subseteq \{1, \dots, n\}$ with at least 2 elements. 
\end{dfn}

\begin{rem}
There is another, equivalent, way to define tautological classes.
We consider the system of Chow rings $A^*(\CH_{g,n}^{rt})$ for all stable $(g,n)$. 
The system of tautological rings $R^*(\CH_{g,n}^{rt})$ are defined as the smallest collection of 
$\QQ$-subalgebras of the Chow rings $A^*(\CH_{g,n}^{rt})$ 
having the identity element and stable under all natural maps among these spaces. 
It is straightforward to see that all classes we defined above belong 
to the tautological ring with this definition and they are generators. 
\end{rem}

\begin{rem}
Recall that the line
bundle $\LL_i$, whose fiber over the moduli point 
$(C;x_1, \dots, x_n)$ is the cotangent space of $C$ at $x_i$, gives tautological classes.
Its first Chern class is called the $\psi_i$-class.
It is related to the classes considered before via the following equality:
$$\psi_i=K_i+\sum_{i \in I} D_I.$$
Other natural classes on the moduli space $\CH_g$ are kappa classes.
Recall that the kappa class $\ka_i$ is defined as $\pi_*(K^{i+1})$.
Notice that the kappa class $\ka_i$ vanishes when $i>0$ since $\CH_g$ has trivial Chow groups. 
\end{rem}

We study the connection between tautological classes on moduli of curves and the universal Jacobian.
There are many natural maps from a curve into its Jacobian.
We find it more convenient to use a Weierstra{\ss}  point on the curve to define such a map.
We work with the moduli space $\W_g$ of Weierstra{\ss} pointed hyperellipctic curves of genus $g$. 
This space parameterizes objects of the form $(C,p)$, 
where $C$ is a hyperelliptic curve of genus $g$ and $p$ is a Weierstra\ss \ point on $C$.
The universal family $\pi:\CC \too \W_g$ admits a section $s: \W_g \too \CC$.
It associates the Weierstra{\ss} point $p \in C$ to the pair $(C,p)$.
The space $\W_g$ is a finite cover of $\CH_g$ of degree $2g+2$.
In a similar way we consider a pointed version of this moduli space and define the space $\W_{g,n}^{rt}$.
In an analogous manner we define tautological rings of the moduli spaces parameterizing  Weierstra\ss \ pointed curves.
In each case there is a map to a moduli of curves by ignoring the Weierstra\ss \ point.
Tautological classes are defined as pull-backs of tautological classes on the moduli of curves via these maps.

\section{Tautological classes on the universal Jacobian}

In this section we review basic notions about tautological classes on the universal Jacobian. 
Relations among these classes give interesting results on moduli of curves. 
The tautological ring of a fixed Jacobian variety is defined by Beauville. 
In \cite{B} he studies tautological classes on the Jacobian of a fixed curve under algebraic equivalence. 
The idea is to consider the class of a curve of genus $g$ 
inside its Jacobian and apply all natural operators to it induced from 
the group structure on the Jacobian and the intersection product in the Chow ring. 
He shows that the resulting algebra becomes stable under the Fourier transform.
In fact, if one applies the Fourier transform to the class of the curve, 
all components in different degrees belong to the tautological algebra.
Beauville shows that these components yield a set of generators with $g-1$ elements.
When the curve admits a degree $d$ map into the projective line the tautological ring is generated by $d-1$ elements. 
 
Let $\pi: \CC \too S$ be a family of smooth curves of genus $g > 0$ which admits a section $s: S \too \CC$. 
Denote by $\J_g:=\text{Pic}^0(\CC/S)$ the relative Picard scheme of divisors of degree zero. 
It is an abelian scheme over the base $S$ of relative dimension $g$.
The section $s$ induces an injection $\iota: \CC \too \J_g$ from $\CC$ into the universal Jacobian $\J_g$.
The geometric point $x$ on a curve $C$ is sent to the line bundle $\OO_C(x-s)$ via the morphism $\iota$. 
The abelian scheme $\J_g$ is equipped with the Beauville decomposition defined in \cite{B}. 
Components of this decomosition are eigenspaces of the natural maps corresponding to multiplication with integers.
More precisely, for an integer $k$ consider the associated endomorphism on $\J_g$.
The subgroup $A^i_{(j)}(\J_g)$ is defined as all degree $i$ classes on which the morphism $k^*$ acts via multiplication with $k^{2i-j}$.
Equivalently, the action of the morphism $k_*$ on $A^i_{(j)}(\J_g)$ is multiplication by $k^{2g-2i+j}$. 
The Beauville decomposition has the following form:
$$A^*(\J_g)=\oplus_{i,j} A_{(i,j)}(\J_g),$$
where $A_{(i,j)}(\J_g):=A_{(j)}^{\frac{i+j}{2}}(\J_g)$.

Beside the intersection product on Chow groups there is another multiplication on $A^*(\J_g)$.
The Pontryagin product $*$ defined in terms of the addition 
$$\mu: \J_g \times_S \J_g \too \J_g$$ 
on $\J_g$.
Let $\pi_1,\pi_2: \J_g \times_S \J_g \too \J_g$ be the natural projections and $x,y$ be elements of the Chow ring of $\J_g$.
The Pontryagin product $x*y$ of $x$ and $y$ is defined as $\mu_*(\pi_1^* x \cdot \pi_2^* y)$.

The universal theta divisor $\theta$ trivialized along the zero section 
is defined in the rational Picard group of $\J_g$.
It defines a principal polarization on $\J_g$. 
The first Chern class $l$ of the Poincar{\'e} bundle $\mathcal{P}$ is defined as $\pi_1^* \theta+\pi_2^* \theta-\mu^* \theta$.
The Fourier Mukai transform $\CF$ gives an isomorphism between $(A^* (\J_g),.)$ and $(A^*(\J_g),*)$.
It is defined as follows:
$$\CF(x)=\pi_{2,*}(\pi_1^* x \cdot \exp(l)).$$ 

We now recall the definition of the tautological ring of $\J_g$ from \cite{Y1}.
It is defined as the smallest $\QQ$-subalgebra of the Chow ring $A^*(\J_g)$ which contains the class of $\CC$ and is stable under the Fourier transform and all maps $k^*$ for integers $k$. 
It follows that for an integer $k$ it becomes stable under $k_*$ as well.
From this definition we get infinitely many tautological classes.
But one can see that the tautological algebra is finitely generated.
In particular, it has finite dimensions in each degree.
The generators are expressed in terms of the components of the curve class in the Beauville decomposition.
Define the following classes:

$$p_{i,j}:=\CF \left(\theta^{\frac{j-i+2}{2}} \cdot [\CC]_{(j)} \right) \in A_{(i,j)}(\J_g).$$

We have that $p_{2,0}=-\theta$ and $p_{0,0}=g[\J_g]$. 
The class $p_{i,j}$ vanishes for $i<0$ or $j<0$ or $j>2g-2$.
The tautological class $\Psi$ comes from the class $K$ of relative dualizing sheaf $\omega_{\pi}$ of 
$\pi:\CC \too S$.
It is defined as $$\Psi:=s^*(K).$$

It is proven in \cite{Y1} that the tautological ring of $\J_g$ is generated by the classes $p_{i,j}$ and $\Psi$.
A crucial feature of the tautological ring of the universal Jacobian is the Lefschetz decomposition.
In the classic case the $\mathfrak{sl}_2$ action on Chow groups of an abelian variety was studied by K{\"u}nnemann \cite{KU}.
Polishchuk \cite{PO1} has studied the $\mathfrak{sl}_2$ action for abelian schemes. 
We follow the standard convention that $\mathfrak{sl}_2$ is generated by elements $e,f,h$ satisfying: 
$$[e,f]=h \D [h,e]=2e, \D [h,f]=-2f.$$
In this notation the action of $\mathfrak{sl}_2$ on Chow groups of $\J_g$ is defined as
$$e: A_{(j)}^i(\J_g) \too A_{(j)}^{i+1}(\J_g) \D x \too -\theta \cdot x,$$
$$f: A_{(j)}^i(\J_g) \too A_{(j)}^{i-1}(\J_g) \D x \too -\frac{\theta^{g-1}}{(g-1)!} * x,$$
$$h: A_{(j)}^i(\J_g) \too A_{(j)}^i(\J_g) \D x \too -(2i-j-g) x,$$

The operators $e,h$ have simple forms. 
The operator $f$ is given by the following differential operator:

$$\mathcal{D}=\frac{1}{2} \sum_{i,j,k,l} \left( \Psi p_{i-1,j-1}p_{k-1,l-1}- \binom{i+k-2}{i-1}p_{i+k-2,j+l} \right) \partial p_{i,j} \partial p_{k,l}+\sum_{i,j} p_{i-2,j}\partial p_{i,j}.$$

The differential operator $\mathcal{D}$ is a powerful tool to produce tautological relations. 
The idea is to start from obvious relations and apply the operator $\mathcal{D}$ to it several times. 
This procedure yields a large class of tautological relations.
A surprising fact is that one can get highly non-trivial relations from this method. 

Yin \cite{Y1} studies tautological classes on the universal Jacobian in his recent thesis.
There are basic relations among tautological classes coming from the $\mathfrak{sl}_2$ action on its Chow ring.
Interpreting these relations on the Jacobian side gives an interesting class on relations on moduli of curves.
All tautological relations on the universal curve $\CC_g$ for $g \leq 19$ and on $M_g$ for $g \leq 23$ are recovered using this method.

As another application we will see that all components of the curve class in positive degrees vanish for families of hypereliptic curves.
These vanishings will be used in this article to find all tautological relations on $\CH_{g,n}^{rt}$.
Let $\pi:\CC \too \W_g$ be the universal curve over the space $\W_g$ of Weierstra{\ss} pointed hyperelliptic curves of genus $g$.
Geometric points of $\CC$ correspond to objects of the form $(C,p,x)$
where $C$ is a smooth hyperelliptic curve of genus $g$, $p$ is a Weierestra{\ss} point of $C$ and $x \in C$ is arbitrary.
The degree zero divisor $x-p$ belongs to the Jacobian of $C$. 
This association defines the map
$$\phi: \CC \too \J_g.$$
The $i^{th}$ component of the Beauville decomposition of image of $\CC$ via $\phi$ is denoted by $\CC_{(i)}$ as usual.

\begin{prop}\label{J}
Let $\CC$ and $\phi: \CC \too \J_g$ be as above.
The component $\CC_{(i)}$ vanishes for all $i>0$.
\end{prop}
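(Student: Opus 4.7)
The plan is to leverage the extra symmetry of the Abel-Jacobi map $\phi$ created by basing at the Weierstra\ss\ section. Writing $\iota:\CC\to\CC$ for the fiberwise hyperelliptic involution over $\W_g$, I first establish the identity
\[ \phi\circ\iota \;=\; [-1]_{\J_g}\circ\phi. \]
This holds because the divisor $x+\iota(x)$ on a fiber $C_t$ belongs to the $g^1_2$, which is represented by $2p_t$ for the Weierstra\ss\ point $p_t$; hence $\phi(x)+\phi(\iota x) = [x+\iota x - 2p_t] = 0$ in $\J_g$.

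Combined with $\iota_*[\CC] = [\CC]$ (since $\iota$ is an automorphism of $\CC$ over $\W_g$), pushing forward along $\phi$ yields $\phi_*[\CC] = [-1]_*\phi_*[\CC]$, so $\phi_*[\CC]$ is $[-1]_*$-invariant. Since $[-1]_*$ acts on $A^{g-1}_{(i)}(\J_g)$ by $(-1)^{2g-2(g-1)+i} = (-1)^i$, matching components in the Beauville decomposition kills all odd ones: $\CC_{(i)} = 0$ for every odd $i$.

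The harder half is the even case $i \ge 2$, where the sign argument is vacuous. Here I would appeal to the relative form of Beauville's theorem: a family of smooth curves admitting a fiberwise degree-$d$ morphism to $\PP^1$ with a totally-ramified section has $\CC_{(i)} = 0$ in the universal Jacobian for all $i \ge d-1$. In our setting $d = 2$ and the Weierstra\ss\ section is totally ramified in the $g^1_2$, which produces all remaining vanishings at once. The key geometric input is the Pontryagin identity
\[ \phi_*[\CC] * \phi_*[\CC] \;=\; 2[W_2] \]
coming from the $2$-to-$1$ map $\CC\times_{\W_g}\CC \to \mathrm{Sym}^2_{\W_g}\CC \to W_2$, together with the collapse of the $g^1_2$-line in $\mathrm{Sym}^2\CC$ to the zero section of $\J_g\to\W_g$.

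The main obstacle lies in this last step: Beauville's original argument is fiberwise, and running it relatively over $\W_g$ requires converting the Pontryagin identity into intersection-theoretic relations among the tautological generators $p_{i,j}$ via the Fourier transform $\CF$, which preserves the Beauville grading and exchanges $*$ with $\cdot$. My strategy is to then propagate the odd vanishing of the first step into the full statement by iterated application of Yin's differential operator $\mathcal{D}$ and the $\mathfrak{sl}_2$-structure on $A^*(\J_g)$, so that the remaining even components $\CC_{(i)}$ with $i \ge 2$ are forced to vanish as identities in the Chow ring of $\J_g$, not merely fiberwise.
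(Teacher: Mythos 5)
Your first step --- the identity $\phi\circ\iota=[-1]\circ\phi$ forced by basing the Abel--Jacobi map at the Weierstra\ss\ section, hence $[-1]_*$-invariance of $\phi_*[\CC]$ and the vanishing of all odd components --- is exactly the paper's argument (phrased there as the vanishing of the Ceresa cycle $[\CC]-(-1)^*[\CC]$), and your sign computation $(-1)^{2g-2(g-1)+i}=(-1)^i$ is correct.

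The even case, however, is where your proposal has a genuine gap: you correctly identify that the odd vanishing must be propagated through Yin's operator $\mathcal{D}$ and the $\mathfrak{sl}_2$-structure, but you stop at the level of a plan and never produce the relation that does the work. Worse, the fallback you name --- a ``relative form of Beauville's theorem'' for curves with a degree-$d$ map to $\PP^1$ --- is essentially the statement to be proved: Beauville's result is for a \emph{fixed} curve and only modulo \emph{algebraic} equivalence, and upgrading it to rational equivalence over the base $\W_g$ is precisely the content of the proposition, so invoking it is circular. The two concrete ingredients you are missing are: (a) the vanishing $\Psi=s^*(K)=0$, which holds because $\operatorname{Pic}(\W_g)\otimes\QQ$ is trivial and is what allows the relative computation to close up; and (b) the specific identity
$$0=\mathcal{D}(p_{3,1}p_{i+1,i-1})=\Psi p_{2,0}p_{i,i-2}-\binom{i+2}{2}p_{i+2,i}+p_{i+1,i-1}p_{1,1}+p_{3,1}p_{i-1,i-1},$$
in which $p_{3,1}=\CF(\CC_{(1)})$ and $p_{1,1}=\CF(\theta\cdot\CC_{(1)})$ vanish by the odd case and $\Psi=0$ by (a), leaving $-\binom{i+2}{2}p_{i+2,i}=0$ and hence $\CC_{(i)}=0$ for all $i\geq 1$. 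No Pontryagin identity or symmetric-power geometry is needed; a single application of $\mathcal{D}$ to an already-zero class suffices.
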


\begin{proof}
The proof is known to the expert.
For example in \cite{B} Beauville proves a similar statement when $C$ is a fixed hyperelliptic curve. 
He shows that the component $C_{(i)}$ is algebraically equivalent to zero when $i>0$.
Here we want to prove the same vanishings for families of hyperelliptic curves under rational equivalence.

Notice that all components $\CC_{(i)}$ vanish when $i$ is odd.
To see this consider the Ceresa cycle $[\CC]-(-1)^*[\CC]$.
This class is zero according to our definition of the morphism $\phi$.
This shows the vanishing of $\CC_{(i)}$ for odd $i$.
We now use this fact to show the vanishing of other components with positive indices.
Equivalently, we show that the class $p_{i+2,i}=\CF(\CC_{(i)})$ is zero when $i \geq 1$.
The vanishing of $p_{3,1}$ is immediate since we know that $\CC_{(1)}=0$.
Consider the following equations:
$$0=\mathcal{D}(p_{3,1}p_{i+1,i-1})=\Psi p_{2,0} p_{i,i-2}-\binom{i+2}{2}p_{i+2,i}+p_{i+1,i-1}p_{1,1}+p_{3,1}p_{i-1,i-1},$$
which is the same as 
$-\binom{i+2}{2}p_{i+2,i}$.
Notice that the class $\Psi$ 
vanishes since the Picard group of $\W_g$ is trivial. 
This proves the claim since the coefficient of $p_{i+2,i}$ is not zero.
\end{proof}

\begin{rem}
In \cite{B} Beauville shows that the tautological ring of the Jacobian of a fixed hyperelliptic curve of genus $g$ under algebraic equivalence is isomorphic to $\QQ[\theta]/(\theta^{g+1})$.
The vanishing proved above gives the same presentation for the tautological ring of $\J_g$ over $\CH_g$ under rational equivalence.
\end{rem}

\begin{cor}\label{K}
Let $\pi:\CC \too \W_g$ and $s: \W_g \too \CC$ be as before.
We have the relation $K=(2g-2)s$ in Pic$(\CC)$.
\end{cor}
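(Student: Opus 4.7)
The plan is to verify $K=(2g-2)s$ on every fiber of $\pi:\CC\to\W_g$ and then descend this identity using the fact that the rational Picard group of $\W_g$ is trivial, already invoked in the proof of Proposition~\ref{J}.

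For the fiberwise step, fix a smooth hyperelliptic curve $C$ of genus $g$ together with a Weierstra{\ss} point $p$. Since $p$ is a ramification point of the hyperelliptic double cover $C\to\PP^1$, one has $2p\sim g^1_2$; combined with the classical identity $K_C\sim(g-1)\,g^1_2$ for the canonical bundle of a hyperelliptic curve, this yields $K_C\sim(2g-2)p$. In the universal family, this says that the line bundle $\mathcal{M}:=\omega_\pi\otimes\OO_\CC(-(2g-2)s)$ is trivial on every fiber of $\pi$.

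For the descent step, since $\pi$ is smooth and proper with geometrically connected one-dimensional fibers and admits the section $s$, a fiberwise-trivial line bundle on $\CC$ is necessarily of the form $\pi^*\mathcal{L}$ for some $\mathcal{L}$ on $\W_g$ (the section rigidifies the relative Picard). Writing $\mathcal{M}=\pi^*\mathcal{L}$ and applying $s^*$ gives $\mathcal{L}=s^*\mathcal{M}$, so it suffices to show that $s^*\mathcal{M}$ vanishes in $\text{Pic}(\W_g)\otimes\QQ$. This is immediate from the triviality of $\text{Pic}(\W_g)\otimes\QQ$ noted in the proof of Proposition~\ref{J}.

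The only technical point is the descent in the second step, which uses the standard fact that fiberwise-trivial line bundles on a smooth proper family with a section are pulled back from the base; this is routine, so there is no substantive obstacle, and the argument just assembles the classical Weierstra{\ss} divisor identity with the Picard triviality already available at this stage of the paper.
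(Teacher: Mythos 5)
Your argument is correct, but it takes a genuinely different route from the paper. The paper deduces $K=(2g-2)s$ from the Jacobian side: it pulls back the class $p_{1,1}$, which vanishes by Proposition \ref{J}, along $\phi:\CC\too\J_g$ and computes $\phi^*(p_{1,1})=\tfrac{1}{2}K-(g-1)s-(g-\tfrac{1}{2})\Psi$ using the pull-back procedure from Yin's thesis, so the relation falls out once $p_{1,1}$ and $\Psi$ are killed. You instead use the classical fiberwise identity $K_C\sim(2g-2)p$ for a Weierstra{\ss} point (via $2p\sim g^1_2$ and $K_C\sim(g-1)g^1_2$), followed by a seesaw-type descent: a line bundle trivial on all fibers of a smooth proper family with geometrically connected fibers is pulled back from the base, and the section plus the triviality of $\mathrm{Pic}(\W_g)\otimes\QQ$ finish the job. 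Both steps are sound (the descent works on the Deligne--Mumford stack $\W_g$ with $\QQ$-coefficients, and the Picard triviality is exactly what the paper already invokes to kill $\Psi$). What your approach buys is elementarity and independence from the Jacobian formalism; what the paper's approach buys is consistency with its guiding theme that all relations come from the universal Jacobian, and it exercises the explicit pull-back computation that is reused almost verbatim for the Faber--Pandharipande and Gross--Schoen cycles in the subsequent sections.
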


\begin{proof}
Consider the map $\phi: \CC \too \J_g$ defined before.
We will show that the desired relation follows from the vanishing of the divisor $p_{1,1}$.
We need to calculate the pull-back of $p_{1,1}$ to $\CC$ via $\phi$.
The method for calculating the pull-back of tautological classes on the universal 
Jacobian to families of curves is explained in the thesis of Yin.
We briefly recall the procedure from \cite{Y1}.
Let $\pi_1,\pi_2$ be the projections onto the factors of 
$\CC^2:=\CC \times_{\W_g} \CC$.
The section $s: \W_g \too \CC$ induces two sections 
$s_1,s_2: \W_g \too \CC^2$.
Denote by $d_{1,2}$ the class of the diagonal inside $\CC^2$.
According to the definition the class $\phi^* (p_{1,1})$ is equal to 
the degree one component of the following expression:
$$\phi^* \left(\CF(\theta \cdot [\phi_* \CC]) \right).$$

An argument based on chasing through cartesian squares shows that the expression above is the same as:

$$\pi_{2,*} \left( \pi_1^* \left( \left(s+\frac{K}{2}+ \frac{\Psi}{2} \right) \cdot \exp(-2 s) \right) \cdot 
\exp \left(d_{1,2} \right)  \right) \cdot \exp\left(-s\right).$$

We therefore have the following:

$$\phi^*(p_{1,1})=\frac{1}{2}K-(g-1)s-(g-\frac{1}{2})\Psi.$$
The result follows since the classes $p_{1,1}$ and $\Psi$ vanish.
\end{proof}

\section{The Faber-Pandharipande cycle}

There is a natural way to define the tautological ring $R^*(C^n) \subset A^*(C^n)$ of products of a fixed smooth curve $C$ for positive integers $n$. 
It is the $\QQ$-algebra generated by canonical classes and diagonals.
More precisely, denote by $K$ the canonical class on the curve $C$ and consider the natural projections $\pi_i: C^n \too C$ and $\pi_{i,j}:C^n \too C^2$ for $1 \leq i < j \leq n$.
From this collection of maps one gets the divisor classes $K_i:=\pi_i^*(K)$ and $d_{i,j}:=\pi_{i,j}^*(D)$, where $D \subset C^2$ is the diagonal class.
The tautological ring $R^*(C^n)$ is defined to be the $\QQ$-subalgebra of $A^*(C^n)$ generated by $K_i,d_{i,j}$. 
One could simply restrict tautological cycles on the product $\CC_g^n$ of the universal curve $\CC_g$ of genus $g$
to the fiber $C^n$ and recover the same set of generators.

Faber and Pandharipande in an unpublished work have studied this ring in cohomology. 
From their analysis one gets a complete description of this ring.
In particular, there is an explicit presentation of all relations.
The resulting algebra becomes Gorenstein.

It is natural to ask whether 
$R^*(C^n) \subset A^*(C^n)$ 
has the Gorenstein property as well.
The situation becomes difficult already when we consider the surface $C^2$.
There are 4 cycles 
$K_1K_2, K_1d_{1,2},K_2d_{1,2},d_{1,2}^2$ 
in degree 2 with the following relations:
$$d_{1,2}^2=-K_1 d_{1,2}=-K_2d_{1,2}.$$
The proportionality 
$K_1K_2=(2g-2)K_1d_{1,2}$ 
holds in cohomology.
One wonders whether this relation is true in Chow as well.
This was shown by Faber and Pandharipande for $g \leq 3$. 
Green and Griffiths \cite{GG} study the zero cycle 
$K_1K_2-(2g-2)K_1d_{1,2}$ 
for generic curves defined over complex numbers.
Their Hodge theoretic analysis is based on an infinitesimal invariant.
In particular, they show that the Faber-Pandharipande cycle doesn't vanish when $C$ is a generic curve of $g \geq 4$. 
In \cite{Y2} Yin shows that the same statement is true in arbitrary characteristic.
The idea of the proof is to write the Faber-Pandharipande cycle 
as the pull-back of a tautological class from the Jacobian of the curve.
He observes that the corresponding tautological class on the Jacobian doesn't vanish for a generic curve of genus $\geq 4$.
Yin proves that the same is true for its pull-back.
This shows the non triviality of this cycle for such curves. 
We will prove the vanishing of the Faber-Pandharipande cycle on the locus of hyperelliptic curves with the same idea:

\begin{prop} \label{FP}
Let $\pi:\CC \too \CH_g$ 
be the universal hyperelliptic curve of genus $g$.
The cycle $K_1K_2-(2g-2)K_1d_{1,2}$ vanishes on $\CC^2$.
\end{prop}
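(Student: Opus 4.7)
The plan is to leverage Corollary~\ref{K}, in which the hyperelliptic input has already been absorbed via the universal Jacobian. First, I would base change to the degree-$(2g+2)$ Weierstrass cover $\W_g \to \CH_g$. Because all Chow groups are taken with $\QQ$-coefficients, proper pushforward along this finite cover (and dividing by $2g+2$) converts relations over $\CC \times_{\W_g} \CC$ into relations over $\CC^2 = \CC \times_{\CH_g} \CC$. So it suffices to verify the identity over $\W_g$, where the Weierstrass section $s\colon \W_g \to \CC$ is globally defined.

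By Corollary~\ref{K}, we have $K = (2g-2) s$ in $\mathrm{Pic}(\CC)$ over $\W_g$. Pulling back along the two projections $\pi_1, \pi_2 \colon \CC \times_{\W_g} \CC \to \CC$ yields $K_i = (2g-2) s_i$ for $i=1,2$, where $s_i := \pi_i^* s$. Substituting into the Faber-Pandharipande expression gives
\[
K_1 K_2 - (2g-2) K_1 d_{1,2} = (2g-2)^2 \bigl( s_1 s_2 - s_1 d_{1,2} \bigr).
\]
It therefore remains to verify $s_1 s_2 = s_1 d_{1,2}$ as cycle classes. By the projection formula applied to the diagonal $\Delta \colon \CC \to \CC \times_{\W_g} \CC$, one obtains $s_1 \cdot d_{1,2} = \Delta_* \Delta^* s_1 = \Delta_* s$. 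On the other hand $s_1$ and $s_2$ meet transversely (both are smooth Cartier divisors of complementary relative codimension) along the image of the diagonal Weierstrass section $(s,s) = \Delta \circ s \colon \W_g \to \CC \times_{\W_g} \CC$, whose cycle class is again $\Delta_* s$. Hence the two terms coincide and the vanishing follows.

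The genuine obstacle is packed into Corollary~\ref{K}, itself a consequence of the Jacobian-side vanishing $p_{1,1}=0$ established in Proposition~\ref{J}. Once the divisorial identity $K=(2g-2)s$ is available over $\W_g$, the Faber-Pandharipande vanishing reduces to the one-line divisor computation above, with no further appeal to Yin's differential operator $\mathcal{D}$ or to a direct pullback expression for the Faber-Pandharipande class on the universal Jacobian.
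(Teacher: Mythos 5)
Your proposal is correct and follows essentially the same route as the paper's first argument: pass to $\W_g$, apply Corollary~\ref{K} to write $K_i=(2g-2)s_i$, and observe that $s_1s_2=s_1d_{1,2}$ is the class of the diagonal Weierstrass section. You additionally make explicit the descent from $\W_g$ to $\CH_g$ via pushforward along the degree-$(2g+2)$ cover with $\QQ$-coefficients, a step the paper leaves implicit.
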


\begin{proof}

Let $\pi:\CC \too \W_g$ be the universal curve over the space $\W_g$ together with the section $s: \W_g \too \CC$.
Denote by $s_1,s_2$ the induced sections from $\W_g$ to the space
$\CC \times_{\W_g} \CC$.
We have the relations 
$$K_i=(2g-2)s_i, \D \text{for} \ i=1,2$$ 
from the statement proven in Corollary \ref{K}.
This gives the desired vanishing since $K_1K_2=(2g-2)^2s_1s_2$ and 
$K_1d_{1,2}=(2g-2)s_1s_2$.

There is another way to see this:
Consider the following map 
$$\phi_2: \CC \times_{\W_g} \CC \too \J_g.$$

Notice that a geometric point on the space 
$\CC \times_{\W_g} \CC$ has the form 
$(C,p,x,y)$, where $C$ is a hyperelliptic curve with a Weierstra{\ss} point $p$ and $x,y \in C$ are arbitrary. 
The image of this point under $\phi_2$ is the divisor $x+y-2p$ on the Jacobian of $C$. 
After calculating the pull-back of the classes $p_{1,3},p_{2,2}$ to 
$\CC \times_{\W_g} \CC$ we obtain the following relation:

$$\phi_2^*\left(4g p_{2,2}+(4g+6)p_{1,3}\right)=
-\left(\frac{g}{g-1} \right)^2 \left(K_1K_2-(2g-2)K_1d_{1,2}\right).$$

The result follows from the vanishing of $p_{1,3},p_{2,2}$ on the universal Jacobian proved in Proposition \ref{J}.
\end{proof}

The vanishing of the Faber-Pandharipande cycle can be used to show another vanishing on the universal hyperelliptic curve:

\begin{cor} \label{K2}
The cycle $K_1^2$ vanishes on the universal curve $\pi:\CC \too \CH_g$.
\end{cor}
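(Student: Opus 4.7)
The plan is to pull the Faber--Pandharipande relation of Proposition \ref{FP} back along the relative diagonal $\Delta: \CC \hookrightarrow \CC^2 = \CC \times_{\CH_g} \CC$. Under $\Delta^*$ both coordinate projections $\pi_1,\pi_2: \CC^2 \too \CC$ become the identity, so $\Delta^*(K_i) = K$ for $i=1,2$, and consequently $\Delta^*(K_1 K_2) = K^2$. Meanwhile, the self-intersection formula identifies $\Delta^*(d_{1,2})$ with $c_1$ of the normal bundle of the relative diagonal, namely the relative tangent bundle $T_{\CC/\CH_g}$, whose first Chern class is $-K$. Hence $\Delta^*(d_{1,2}) = -K$ and $\Delta^*(K_1 d_{1,2}) = -K^2$.

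Combining these computations, $\Delta^*\bigl(K_1 K_2 - (2g-2)\, K_1 d_{1,2}\bigr) = K^2 + (2g-2) K^2 = (2g-1) K^2$. Proposition \ref{FP} asserts that the expression inside $\Delta^*$ already vanishes in $A^2(\CC^2)$, so its pullback vanishes in $A^2(\CC)$, giving $(2g-1) K^2 = 0$. Since $g \geq 2$ on the hyperelliptic locus and we work with rational coefficients, the factor $2g-1$ is invertible, so $K^2 = K_1^2 = 0$ on the universal curve, as desired.

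The only step requiring genuine care is fixing the sign $\Delta^*(d_{1,2}) = -K$, which comes from the standard self-intersection formula for a codimension-one regular embedding; this is a routine adjunction computation rather than a serious obstacle. As a sanity check, an entirely parallel argument avoids Proposition \ref{FP} altogether: on $\CC \too \W_g$ the identity $K = (2g-2) s$ of Corollary \ref{K}, combined with $K \cdot s = s_*(\Psi)$ via the projection formula and the vanishing $\Psi = 0$ (the Picard group of $\W_g$ being trivial, as noted in the proof of Proposition \ref{J}), directly yields $K^2 = (2g-2)\, s_*(\Psi) = 0$ on $\CC/\W_g$. This descends to $\CC/\CH_g$ along the finite cover $\W_g \too \CH_g$ of degree $2g+2$ with $\QQ$-coefficients, giving an independent verification.
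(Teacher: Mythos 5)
Your argument is correct, but it takes a different route from the paper's. The paper multiplies the relations $K_1K_2=(2g-2)K_1d_{1,2}=(2g-2)K_2d_{1,2}$ by $K_1$ or $K_2$ and pushes forward along the first projection $\CC\times_{\CH_g}\CC\too\CC$; the key input there is $\pi_{1,*}(K_2^2)=\pi^*\kappa_1=0$, which holds because $\CH_g$ has trivial Chow groups, so that $K\cdot\kappa_1=(2g-2)K^2$ forces $K^2=0$. You instead pull the Faber--Pandharipande relation back along the relative diagonal, trading the vanishing of $\kappa_1$ for the self-intersection formula $\Delta^*(d_{1,2})=c_1(T_{\CC/\CH_g})=-K$, and the resulting coefficient $2g-1$ is nonzero, so the conclusion follows; this is clean and avoids any push-forward. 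Your second argument (via $K=(2g-2)s$ on $\CC/\W_g$, $K\cdot s=s_*(\Psi)=0$, and descent along the degree $2g+2$ cover) is also valid and is closest in spirit to the first proof of Proposition \ref{FP} in the paper; it is worth noting that it bypasses Proposition \ref{FP} entirely and rests only on Corollary \ref{K} and the triviality of $\operatorname{Pic}(\W_g)\otimes\QQ$. All three routes are essentially interchangeable in length and in the inputs they require.
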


\begin{proof}
We have the relations $$K_1K_2=(2g-2)K_1d_{1,2}=(2g-2)K_2d_{1,2}$$ 
on the space $\CC \times_{\CH_g} \CC$. 
Intersect these relations with the divisor classes $K_1,K_2$ and compute their push-forwards to $\CC$ under the natural projection 
$\CC \times_{\CH_g} \CC \too \CC$, 
onto the first factor. 
The vanishing of $K_1^2$ follows. 
\end{proof}

\begin{rem}
In \cite{Y2} the Faber-Pandharipande cycle is shown to be the pull-back of
the class 
$W:=2p_{1,1}^2-(4g-4)p_{2,2}$,
for a fixed curve of genus $g$.
It is possible to prove Proposition \ref{FP} and Corollary \ref{K2} using the class $W$ with a similar method.
The only difference is that the pull-back of $W$ has extra terms involving $K_1^2$ and $K_2^2$, which both vanish after all.
\end{rem}

\section{The Gross-Schoen cycle}

In \cite{GS} Gross and Schoen considered a smooth and projective curve $X$ defined over a field $k$ together with a $k$-rational point $p$.
The codimension 2 cycle $\Delta_p$ on the product $X^3$ is defined in terms of the diagonal classes and the point $p$.
The authors call this class \emph{the modified diagonal cycle} and study some of its properties.
The basic fact about $\Delta_p$ is that it vanishes in cohomology.
It is proven that the class $\Delta_p$ in the second Griffiths group $Gr^2(X^3):=
A^2_{hom}(X^3)/A^2_{alg}(X^3)$,
measuring homologically trivial cycles modulo algebraically trivial cycles, is independent of the choice of the point $p$.
When $X$ is a rational curve, an elliptic curve or a hyperelliptic curve the cycle $\Delta_p$ is shown to be zero in Chow.
In the first two cases the point $p$ can be arbitrary but for hyperelliptic curves it has to be a Weierstra{\ss} point.
In this article we want to show the same result in the relative setting.

Let us recall the definition of the Gross-Schoen cycle in the classical case. 
Let $X$ be a smooth curve with a point $p$ as above.
Consider the following subvarieties:

$$\Delta_1=\{(x,p,p): x \in X \}, \D 
\Delta_2=\{(p,x,p): x \in X \},$$ 
$$\Delta_3=\{(p,p,x): x \in X \}, \D
\Delta_{1,2}=\{(x,x,p): x \in X \},$$
$$\Delta_{1,3}=\{(x,p,x): x \in X \}, \D
\Delta_{2,3}=\{(p,x,x): x \in X \},$$
$$\Delta_{1,2,3}=\{(x,x,x): x \in X \}.$$

The degree 2 cycle $\Delta_p$ is defined on the product $X^3$ as follows:

$$\Delta_p=\Delta_{1,2,3}-\Delta_{1,2}-\Delta_{1,3}-\Delta_{2,3}+\Delta_1+
\Delta_2+\Delta_3.$$

There is another version of this cycle with respect to the canonical class.
It has the following form:
$$\Delta_K=\Delta_{1,2,3}-\frac{1}{2g-2}\left(K_1d_{2,3}+K_2d_{1,3}+K_3d_{1,2}\right)+\frac{1}{(2g-2)^2}\left(K_1K_2+K_1K_3+K_2K_3\right),$$
where $d_{i,j}$ is the diagonal class $x_i=x_j$ as usual.
Recall that a curve $C$ has a subcanonical point $p$ if the equality $K=(2g-2)p$ holds.
In this situation the classes $\Delta_K$ and $\Delta_p$ defined above coincide with each other. 
Notice that the cycle $\Delta_K$ is symmetric.

\begin{prop}
The cycle $\Delta_K$ vanishes on the locus of hyperelliptic curves.
\end{prop}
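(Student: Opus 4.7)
The plan is to reduce to the universal family over $\W_g$ and then pull back a suitable vanishing from the universal Jacobian. Since the cover $\W_g \to \CH_g$ is finite of degree $2g+2$ and we work with $\QQ$-coefficients, it suffices to prove the vanishing over $\W_g$. There the universal curve carries the Weierstra{\ss} section $s$, and Corollary \ref{K} gives $K=(2g-2)s$ in $\mathrm{Pic}(\CC)$. Substituting this into the formula for $\Delta_K$ on $\CC \times_{\W_g} \CC \times_{\W_g} \CC$ collapses the $K_i$--based expression to the classical modified diagonal at the Weierstra{\ss} section,
\[
\Delta_s = \Delta_{1,2,3} - (\Delta_{1,2}+\Delta_{1,3}+\Delta_{2,3}) + (\Delta_1+\Delta_2+\Delta_3),
\]
where $\Delta_I$ denotes the locus obtained from the small diagonal indexed by $I$ by plugging in $s$ at the remaining factors. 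So it is enough to show $\Delta_s=0$ on $\CC^3 := \CC\times_{\W_g}\CC\times_{\W_g}\CC$.

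For the latter I would follow the same Jacobian strategy used in Corollary \ref{K} and Proposition \ref{FP}. Consider the Abel--Jacobi map
\[
\phi_3:\CC^3 \longrightarrow \J_g,\qquad (C,p,x_1,x_2,x_3)\longmapsto \OO_C(x_1+x_2+x_3-3p),
\]
whose pushforward in Chow is the threefold Pontryagin product $[\CC] * [\CC] * [\CC]$. Decomposing $[\CC]=\sum_{i\ge 0}\CC_{(i)}$ via the Beauville decomposition and invoking Proposition \ref{J}, only the single component $\CC_{(0)}$ survives, so every tautological class $p_{i,j}$ with $j>0$ vanishes on $\J_g$ over $\W_g$; the class $\Psi$ also vanishes since $\mathrm{Pic}(\W_g)\otimes\QQ=0$. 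The key computation I would carry out is to express $\Delta_s$ as the pullback under $\phi_3$ of a polynomial in $\theta$ and the $p_{i,j}$ with $j>0$ (plus $\Psi$-terms), exactly in the spirit of the identity for the Faber--Pandharipande cycle given at the end of the proof of Proposition \ref{FP}.

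To obtain that formula explicitly, I would chase through cartesian squares as Yin does: each $\Delta_I$ is a pullback via the Abel--Jacobi map of smaller arity composed with a section, so by inclusion--exclusion $\Delta_s$ is the coefficient of the alternating sum $\sum_I (-1)^{3-|I|}$ of three-copy pullbacks, which one recognizes as a component of $\phi_3^*\mathcal{F}(\cdots)$ lying in the part of the Beauville decomposition indexed by $j\ge 1$. Applying Proposition \ref{J} then forces $\Delta_s=0$.

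The main obstacle is the bookkeeping in the last pullback calculation: as in the proof of Proposition \ref{FP} one must correctly identify the coefficients coming from $\exp(l)$ and the diagonal classes $d_{i,j}$ in the triple fiber product, and verify that the modified diagonal $\Delta_s$ appears with a nonzero coefficient against a combination of $p_{i,j}$ that Proposition \ref{J} kills. A possible alternative, avoiding the direct chase, is to work with the Pontryagin-product expansion of $[\CC]^{*3}$: vanishing of all $\CC_{(i)}$ for $i>0$ makes that expansion concentrated in one Beauville piece, and inclusion--exclusion between the six diagonals translates this concentration precisely into the statement $\Delta_s=0$.
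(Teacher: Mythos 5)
Your proposal is correct and follows essentially the same route as the paper: pull back along the Abel--Jacobi map $\phi_3$ attached to the Weierstra{\ss} section, express the modified diagonal as a combination of the classes $p_{i,j}$ with $j>0$ (Yin's identity $p_{3,1}-\tfrac{1}{g-1}p_{2,0}p_{1,1}+\tfrac{2g}{g-1}p_{2,2}-\tfrac{2g-3}{2(g-1)^2}p_{1,1}^2$), and conclude by Proposition \ref{J}. Your preliminary reduction of $\Delta_K$ to $\Delta_s$ via Corollary \ref{K} is a harmless extra step the paper only remarks on, and the bookkeeping you flag as the remaining obstacle is exactly the computation the paper defers to Yin's formula and the argument of Corollary \ref{K}.
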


\begin{proof}
Let $\pi: \CC \too \W_g$ be the family of Weierstra{\ss} pointed hyperelliptic curves of genus $g$ as before.
We define a map 
$$\phi_3: \CC \times_{\W_g} \CC \times_{\W_g} \CC \too \J_g.$$

For a pointed curve $(C,p)$ and points $x_1,x_2,x_3 \in C$ the associated divisor on the Jacobian of $C$ is the divisor $x_1+x_2+x_3-3p$. 
The cycle $\Delta_K$ is the pull back of the following class via $\phi_3$:
\begin{equation}\tag{1}\label{GS}
p_{3,1}-\frac{1}{g-1}p_{2,0}p_{1,1}+\frac{2g}{g-1}p_{2,2}-
\frac{2g-3}{2(g-1)^2}p_{1,1}^2.
\end{equation}

This was proven by Yin in \cite{Y1} for a fixed curve.
A computation similar to \ref{K} shows that this formula stays valid over the base $\W_g$ as well. 
The vanishing of the cycle $\Delta_K$ follows from Proposition \ref{J}.

\end{proof}

\begin{rem}
In \cite{Z} Zhang studies the connection between the triviality of the Gross-Schoen cycle and the Ceresa cycle in the Chow ring of $X^3$ for a fixed curve $X$.
From his result one can see their equivalence assuming the triviality of the Faber-Pandharipande cycle. 
The vanishing of the Ceresa cycle for families  of Weierstra{\ss} pointed hyperelliptic curves
is obvious from its definition.
It would be interesting to see whether the result proven above can be obtained from this vanishing by the approach of Zhang.
That might give insight into the following natural question about the torsion cycles found in this article:
\begin{Q}
What are the orders of the Faber-Pandharipande and Gross-Schoen cycle in the integral Chow ring of the moduli space of pointed hyperelliptic curves?
\end{Q}
\end{rem}

Modified diagonals can be defined in a more general settings.
The following formulation is due to O'Grady.
Let $X$ be any $d$-dimensional algebraic variety over a field $k$ and $p \in X(k)$ be a $k$-rational point.
The modified cycle $\Gamma^m(X,p)$ can be defined on the product $X^n$.
For any subset $I$ of $\{1, \dots, n\}$ let
$$\Delta_I^n(X,p):=\{(x_1,\dots,x_n): x_i=x_j \ \text{if} \ i,j \in I \text{and} \ x_i=p  \ \text{if} \ i \notin I\}.$$
The $n^{th}$ modified cycle associated to the point $p$ is the $d$-cycle on $X^n$ given by
$$\Gamma^n(X,p):=\sum_{\emptyset \neq I \subset \{1, \dots, n\}} (-1)^{n-|I|} \Delta_I^n(X,p).$$
In \cite{O} it is conjectured that for a hyperk{\"a}hler variety of dimension $d=2n$ there exists a point $p \in X$ such that the cycle $\Gamma^{2n+1}(X,p)$ vanishes.
It is also conjectured that the modified diagonal 
$\Gamma^{2g+1}(A,p)$ vanishes for a point $p$ on an abelian variety of dimension $g$.
A recent result of Moonen and Yin \cite{MY} establishes the second conjecture. 
In \cite{MY2} the same authors among other things give a motivic description of modified diagonals.

\section{The degree $g+1$ relation}\label{g+1}

We have proved that the Faber-Pandharipande cycle and Gross-Schoen cycle vanish on families of hyperelliptic curves.
In this section we obtain a degree $g+1$ relation which plays an essential role in our study. We will see that there are two different ways to get this relation.
The first source of this relation is again the universal Jacobian.
We use the formula given by Grushevsky and Zakharov for the pull-back of the theta divisor to the moduli of curves.
The second method is based on studying linear systems for generic curves of genus $g$. This method produces relations in more general settings.
Restricting to the locus of hyperelliptic curves we obtain a tautological relation of degree $g+1$ involving $2g+2$ points.

\subsection{Relations coming from the theta divisor}

The geometry of the theta divisor on the universal Jacobian gives a very simple way to prove our relation.
The pull-back of the theta divisor to the space of curves is investigated by several authors.
Hain \cite{H} gives a formula for this class in terms of standard boundary cycles in cohomology. 
Hain uses this formula to compute the pull-back of the zero section of the universal Jacobian to the space of curves of compact type. 
Hain's formula answers Eliashberg's question.
Analogue results are proven by M{\"u}ller
\cite{MUL}.
Grushevsky and Zakharov \cite{GZ1}, \cite{GZ2} give a formula for the pull-back of the theta divisor to the spaces $M_{g,n}^{ct}$ classifying pointed curves of compact type and the space $\M_{g,n}$ of stable pointed curves.
Here we follow the notation in \cite{GZ1}.
Let $\J_g$ be the universal Jacobian of degree zero divisors over $M_g^{ct}$.
We denote its pull-back under the natural projection $M_{g,n}^{ct} \too M_g^{ct}$ by the same letter.
Consider a collection $\mathbf{d}=(d_1, \dots, d_n) \in \Z^n$ of integers satisfying $\sum_{i=1}^nd_i=0$. 
For any moduli point 
$$(C;x_1 , \dots, x_n) \in M_{g,n}^{ct}$$
one gets a degree zero divisor $\sum_{i=1}^n d_ix_i$ on the Jacobian of $C$.
This association defines a map
$$s_{\mathbf{d}}:M_{g,n}^{ct} \too \J_g.$$
Let $\theta$ be the universal symmetric theta divisor trivialized along the zero section as before. 
In \cite{GZ1} Grushevsky and Zakharov compute the pull-back 
$s_{\mathbf{d}}^*(\theta)$ in terms of standard divisor classes on $M_{g,n}^{ct}$. 
We recall the definition of the divisor class $\Delta_{h,I}$ for $0 \leq h \leq g$ and a subset $I$ of the marking set $\{1, \dots, n\}$.
The generic point on this divisor corresponds to a singular curve having 2 irreducible components.
One of the components has genus $h$ whose set of markings is $I$.
For any such subset $I$ the number $d_I$ is defined as the sum $\sum_{i \in I}d_i$.

\begin{thm}
For deg $\mathbf{d}=0$, the class $s_{\mathbf{d}}^*(\theta) \in \text{Pic}_{\QQ}(M_{g,n}^{ct})$ 
of the pull-back of the universal symmetric theta divisor trivialized along the zero section is equal to
$$s_{\mathbf{d}}^*(\theta)=\frac{1}{2}\sum_{i=1}^n d_i^2K_i-\frac{1}{2}\sum_{I \subseteq \{1, \dots , n\}} (d_I^2-\sum_{i \in I} d_i^2) \Delta_{0,I}-
\frac{1}{2} \sum_{h>0, I \subset \{1, \dots, n\}} d_I^2\Delta_{h,I}$$ 
\end{thm}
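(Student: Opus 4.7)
My plan has three ingredients: the quadratic nature of the theta class in $\mathbf{d}$, a Hain-style base case for the difference Abel-Jacobi map, and a polarization/test-curve argument over the boundary.

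First, since $\theta$ is a symmetric line bundle on $\J_g$ trivialized along the zero section, its first Chern class defines a quadratic function on sections $\sigma: M_{g,n}^{ct}\to \J_g$: for any integer $k$ one has $(k\sigma)^*\theta = k^2 \sigma^*\theta$, while the expression $B(\sigma,\tau):=(\sigma+\tau)^*\theta - \sigma^*\theta - \tau^*\theta$ is bilinear in $(\sigma,\tau)$ and coincides with the pull-back of the Poincar\'e class along $(\sigma,\tau)$. Because $\mathbf{d}\mapsto s_{\mathbf{d}}$ is a group homomorphism from the lattice $\{\mathbf{d}\in\Z^n:\sum d_i=0\}$ to sections of $\J_g$, the class $s_{\mathbf{d}}^*\theta$ is a homogeneous quadratic polynomial in the entries of $\mathbf{d}$. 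Both sides of the claimed formula are manifestly quadratic in $\mathbf{d}$, so it suffices to check the equality on a spanning set such as $\{\mathbf{d}=e_i-e_j:i\neq j\}$ together with the bilinear polarization identity.

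Second, I would establish the base case $\mathbf{d}=e_i-e_j$ along Hain's lines \cite{H}. On the smooth locus $M_{g,n}\subset M_{g,n}^{ct}$ the section $s_{e_i-e_j}$ is the relative Abel-Jacobi map for the divisor $x_i-x_j$, and a direct Poincar\'e-bundle calculation using the adjunction $\psi_i=K_i$ on $M_{g,n}$ gives $\tfrac{1}{2}(K_i+K_j)$ in the interior. I would extend across the boundary using the normal-function viewpoint: the section $s_{e_i-e_j}$ degenerates in a controlled way whenever a boundary divisor $\Delta_{h,I}$ separates the markings $i$ and $j$, producing a correction supported on that divisor whose multiplicity is read off from a one-parameter test curve meeting $\Delta_{h,I}$ transversally at a generic point.

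Third, to promote the base case to arbitrary $\mathbf{d}$ I would polarize. Fixing any auxiliary reference label $*$ and writing $s_i:=s_{e_i-e_*}$ (with the convention $s_*=0$), the identity
\[
s_{\mathbf{d}}^*\theta = \sum_i d_i^2\, s_i^*\theta + \sum_{i<j} d_i d_j\, B(s_i,s_j)
\]
holds and is independent of the choice of $*$ because $\sum d_i=0$. The $K_i$-coefficients are read off directly from the base case, while the coefficient of each $\Delta_{h,I}$ becomes a combinatorial sum over pairs $(i,j)$ of the boundary contributions from $s_i^*\theta$ and $B(s_i,s_j)$. A short algebraic identity then matches this sum with $\tfrac{1}{2}d_I^2$ when $h>0$ and with $\tfrac{1}{2}(d_I^2-\sum_{i\in I}d_i^2)$ when $h=0$, giving the claimed formula.

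The hard part will be the correction on $\Delta_{0,I}$. Because the rational tail contributes no factor to the Jacobian, the section $s_{e_i-e_j}$ becomes trivial (not merely split) whenever both $i$ and $j$ lie on the same rational tail, producing an extra cross-term absent in the $h>0$ case. Tracking this degeneration precisely — most naturally through a N\'eron-model analysis of $\J_g$ over a transverse test curve through each boundary stratum — is the real technical content of the argument, and it is exactly this effect that produces the asymmetry between the $\Delta_{0,I}$ and $\Delta_{h,I}$ coefficients in the final answer.
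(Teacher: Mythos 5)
The paper does not prove this statement at all: it is imported verbatim from Grushevsky--Zakharov \cite{GZ1} (with Hain's \cite{H} cohomological antecedent), so there is no in-paper argument to compare against. Judged on its own, your outline is the standard strategy behind the cited proofs -- quadraticity of $\mathbf{d}\mapsto s_{\mathbf{d}}^*(\theta)$ from the rigidified symmetric theta bundle, a base case for $\mathbf{d}=e_i-e_j$, and polarization -- and each of those structural steps is sound (one small sign point: with the paper's convention $l=\pi_1^*\theta+\pi_2^*\theta-\mu^*\theta$, your bilinear form is $B(\sigma,\tau)=-(\sigma,\tau)^*l$, and your reference label $*$ should be taken to be one of the existing $n$ markings so that no extra section needs to be constructed).

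Two substantive comments. First, the entire content of the theorem is the list of boundary multiplicities, and your proposal asserts rather than derives them: the phrase ``whose multiplicity is read off from a one-parameter test curve'' and the closing ``a short algebraic identity then matches this sum'' are exactly the computations that constitute the proof, so as written this is a plan, not a proof. Second, the step you flag as the hard part -- a genuinely different N\'eron-model degeneration on rational tails producing the asymmetric $\Delta_{0,I}$ coefficient -- is a red herring. Substituting the comparison $\psi_i=K_i+\sum_{i\in I}\Delta_{0,I}$ (which on $M_{g,n}^{ct}$ involves only the genus-zero boundary divisors, since only rational tails are contracted under $M_{g,n}^{ct}\to\CC^n$) converts the stated formula into the uniform expression
$$s_{\mathbf{d}}^*(\theta)=\frac{1}{2}\sum_{i=1}^n d_i^2\psi_i-\frac{1}{2}\sum_{h\geq 0,\,I} d_I^2\,\Delta_{h,I},$$
in which $h=0$ and $h>0$ are treated identically. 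The apparent extra term $\sum_{i\in I}d_i^2$ on $\Delta_{0,I}$ is purely the bookkeeping cost of writing the answer in terms of $K_i$ rather than $\psi_i$, not a distinct geometric degeneration phenomenon; recognizing this collapses your ``hard part'' and reduces the whole boundary analysis to a single computation per divisor type.
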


\begin{rem}
In \cite{GZ1} a similar formula is proven when deg $\mathbf{d}=g-1$.
For hyperelliptic curves these relations give equivalent results.
\end{rem}

The vanishing of the class $\theta^{g+1}$ in the Chow ring of the universal Jacobian $\J_g$ gives a relation among tautological classes on $M_{g,n}^{ct}$.
We restrict this relation to the locus of hyperelliptic curves and get a relation on $\CH_{g,n}^{rt}$.
One can show that the relations coming from the vanishing of the class $\theta^{g+1}$ follow from the relations found on $\CH_{g,n}^{rt}$ for $n \leq 3$ as long as $n <2g+2$.
When $n=2g+2$ one gets \emph{one} new relation.
More precisely, for all choices of parameters $d_i$, $i=1, \dots, 2g+2$ the resulting relations on $\CH_{g,2g+2}^{rt}$ are multiples of each other up to a linear combination of the relations involving $n \leq 3$ points. 
As we will see in Section \ref{Product} there will be no new relations afterwards. 

\subsection{Relations from higher jets of differentials}

The method is similar to the method introduced by Faber in \cite{F2} with slight changes. 
This gives tautological relations on products $\CC_g^n$ of the universal curve over $M_g$. 
The resulting relation holds for a general family of curves.

Let $\pi: \CC_g \too M_g$ 
be the universal curve of genus $g$ with the relative dualizing sheaf $\w$. 
We also make the usual convention $K:=c_1(\w)$.
The $n$-fold fibered product of the curve $\CC_g$ over the $M_g$ is denoted by $\CC_g^n$. 
We consider two natural locally free sheaves on this space.
Let $\pi:\CC_g^{n+1} \too \CC_g^n$ be the projection onto the first $n$ factors.
Its relative dualizing sheaf is denoted by $\w_{n+1}$.
The sum of the diagonal classes $d_{i,n+1}$ on $\CC_g^{n+1}$ defines the divisor class $\Delta_n$:
$$\Delta_n=\sum_{i=1}^n d_{i,n+1}.$$
The locally free sheaf $\E_m$ defined for every $m \geq 0$ as follows:
$$\E_m:=\pi_*(\w^{\otimes m}).$$
This is the usual Hodge bundle of rank $g$ when $m=1$.
The fiber of $\E_m$ at a point $(C;x_1 , \dots, x_n)$ is the vector space
$H^0(C,\w_C^{\otimes m})$, 
where $\w_C$ is the dualizing sheaf of the curve $C$.
For $m>1$ it is of rank $(2m-1)(g-1)$.
Another natural bundle is obtained from evaluating differential forms on divisors.
We define the following locally free sheaf of rank $n$ on $\CC_g^n$:
$$\F_{m,n}:=\pi_*\left(\OO_{\Delta_{n+1}} \otimes \w_{n+1}^{\otimes m}\right).$$

The fiber of the sheaf $\F_{m,n}$ at a point $(C;x_1,\dots, x_n)$ is 
$$H^0\left(C,\frac{\w_C^{\otimes m}}{\w_C^{\otimes m}\left(-\sum_{i=1}^n x_i\right)} \right).$$

Consider the natural evaluation map:
$$\phi_{m,n}:\E_m \too \F_{m,n}.$$
For a general $n$ the morphism $\phi_{m,n}$ doesn't behave well.
Its kernel has the fiber 
$$H^0\left(C,\w_C^{\otimes m}\left(-\sum_{i=1}^n x_i\right)\right),$$ 
which depends on the curve $C$ and the points $x_1, \dots, x_n$.
However the situation becomes simpler when $n$ is large enough. 
More precisely, when $n > 2m(g-1)$ the morphism $\phi_{m,n}$ is injective.
The quotient bundle $\F_{m,n}/\E_m$ has rank $n-(2m-1)(g-1)$.
This means that for all $r:=n-2m(g-1) >0$ we get the following vanishing:

\begin{prop}\label{R}
The class $c_{g+r}(\F_{m,n}-\E_m)$ vanishes for all $r >0$. 
\end{prop}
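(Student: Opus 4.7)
The plan is to show that $\phi_{m,n}$ is fiberwise injective; the stated vanishing will then drop out by comparing ranks.

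First, I would identify the fiber of $\phi_{m,n}$ at an arbitrary point $(C;x_1,\ldots,x_n)\in\CC_g^n$. By cohomology and base change, this is the evaluation map
$$H^0(C,\w_C^{\otimes m})\longrightarrow H^0\!\left(C,\w_C^{\otimes m}\big/\w_C^{\otimes m}\bigl(-\textstyle\sum_{i=1}^n x_i\bigr)\right),$$
whose kernel is $H^0\!\left(C,\w_C^{\otimes m}(-\sum_i x_i)\right)$. The twisted pluricanonical line bundle $\w_C^{\otimes m}(-\sum_i x_i)$ has degree $2m(g-1)-n=-r$, which is strictly negative under the hypothesis $r>0$; consequently the kernel vanishes on every fiber, including along the diagonals of $\CC_g^n$ where the $x_i$ collide.

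Since $\E_m$ is locally free and $\phi_{m,n}$ is injective on fibers, a standard argument shows that $\phi_{m,n}$ realizes $\E_m$ as a subbundle of $\F_{m,n}$, with locally free quotient $Q:=\F_{m,n}/\E_m$. A direct rank count gives
$$\mathrm{rank}(Q)=n-(2m-1)(g-1)=r+(g-1)$$
for $m\ge 2$, and an analogous (slightly smaller) rank for $m=1$. Multiplicativity of the total Chern class along the short exact sequence $0\to\E_m\to\F_{m,n}\to Q\to 0$ then yields the identity $c(\F_{m,n}-\E_m)=c(Q)$ in $A^*(\CC_g^n)$. Since $\mathrm{rank}(Q)\le r+g-1<g+r$, the class $c_{g+r}(Q)$ lies strictly above the rank of $Q$ and therefore vanishes identically.

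The only substantive step is the fiberwise kernel computation and the accompanying degree inequality; everything else is formal bookkeeping with Chern classes of a short exact sequence of locally free sheaves.
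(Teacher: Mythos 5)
Your proof is correct and follows essentially the same route as the paper: the paper's justification (given in the discussion immediately preceding the proposition) is precisely that for $n>2m(g-1)$ the fiberwise kernel $H^0(C,\w_C^{\otimes m}(-\sum_i x_i))$ vanishes for degree reasons, so $\E_m$ injects into $\F_{m,n}$ with locally free quotient of rank $n-(2m-1)(g-1)<g+r$, forcing $c_{g+r}(\F_{m,n}-\E_m)=c_{g+r}(Q)=0$. Your added remarks about base change, collisions along the diagonals, and the slightly smaller rank in the case $m=1$ are accurate refinements of the same argument.
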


The Chern classes of the bundle $\F_{m,n}$ can be calculated with the same method as in \cite{F2} using 
Grothendieck-Riemann-Roch.
We have the following formula for its total Chern class:

$$c(\mathbb{F}_{m,n})=(1+mK_1)(1+mK_2-\Delta_2) \dots (1+mK_n-\Delta_n).$$

We recover Faber's relations \cite{F2} when $m=1$.
As we will see there are tautological relations on $\CC^n_g$ which don't come from Faber's relations.

As an example let $g=2$ and take $m=3$. 
In this case we get the relation $c_3(\F_{3,7}-\E_3)=0$.
After multiplying this relation with the divisor class $K_7$ on $\CC_2^7$ and pushing it forward to $\CC_2^6$ via the projection $\CC_2^7 \too \CC_2^6$ we get a degree 3 relation involving 6 points. 
This relation was found in \cite{T2} and was used to study the tautological ring of $M_{2,n}^{rt}$.

As another example take $m=2$ for $g >2$. 
From Proposition \ref{R} we get the following relation involving $n=4g-3$ points:
$$c_{g+1}(\F_{2,4g-3}-\E_2)=0.$$

Notice that $n \geq 2g+2$ by our assumption.
From this relation we get a degree $g+1$ relation involving $2g+2$ points.
There are many ways to do this. All give equivalent results for hyperelliptic curves.
Here is one example of doing this:
Multiply the relation above with the monomial $K_{2g+3} \dots K_{4g-3}$ and push it forward to $\CC_g^{2g+2}$.
The resulting relation is symmetric with respect to $2g+2$ markings.

\section{Products of the universal curve over $\CH_g$}
\label{Product} 

In this section we give a description of the tautological rings for products
$\CC^n$ of the universal hyperelliptic curve $\pi:\CC \too \CH_g$.
We will see that the relations found in previous sections can be used to find all tautological relations. This is based on explicit computations of the intersection pairings.
To simplify the computations we work with a different set of generators.

\begin{dfn}
Let $n \geq 1$ be an integer. For every $1 \leq i \leq n$ and $1 \leq i < j \leq n$ define the following classes:
$$a_i:=\frac{1}{2g-2}K_i, \D b_{i,j}:=d_{i,j}-a_i-a_j.$$
\end{dfn}

It is straightforward to see that the collection of elements $a_i,b_{i,j}$ generate the tautological algebra of $\CC^n$. 
The relations we found in previous sections become simpler in terms of these variables.
The relation $K_i^2=0$ translates into $a_i^2=0$. 
The following relations
$$K_1K_2-(2g-2)K_1d_{1,2}=K_1K_2-(2g-2)K_2d_{1,2}=0$$ 
are equivalent to the vanishings $a_ib_{i,j}=b_{i,j}^2+2ga_ia_j=0$. 
The vanishing of the Gross-Schoen cycle is equivalent to $b_{i,j}b_{i,k}-a_ib_{j,k}=0$.
The degree $g+1$ relation comes from the vanishing of the following symmetric expression:

\begin{equation}\tag{2}\label{Re}
\al_g:=\sum_{\mathcal{I}} b_{i_1,i_2} \dots b_{i_{2g+1},i_{2g+2}}.
\end{equation}

Each term of the sum corresponds to a partition $\mathcal{I}$ of $\{1, \dots, 2g+2\}$ into $g+1$ subsets with 2 elements. 

\begin{ex}
While we consider only hyperelliptic curves our presentation works for elliptic curves as well.
In this case the origin of the elliptic curve plays the role of a Weierstra{\ss} point.

Let $\pi: \CC \too M_{1,1}$ be the universal elliptic curve over $M_{1,1}$. 
Geometric points of $M_{1,1}$ are elliptic curves $(C,p)$,
where $C$ is a smooth curve of genus one and $p \in C$ denotes its origin.
The morphism $\pi$ admits a natural section $a: M_{1,1} \too \CC$.
It associates $p \in C$ to the moduli point $(C,p)$. 
The image of the section $a$ is denoted by the same letter.
Consider the $n$-fold fiber product $\CC^n:=\CC \times_{M_{1,1}} \dots \times_{M_{1,1}} \CC$. 
Notice that $\CC^n$ is birational to the moduli space $M_{1,n+1}$!
The divisor class $a$ defines the divisor $a_i$ 
in the Picard group of $\CC^n$ for $1 \leq i \leq n$.
We also have the diagonal class $d_{i,j}$ for $1 \leq i < j \leq n$.
The class $b_{i,j}$ is defined as $b_{i,j}:=d_{i,j}-a_i-a_j$.
In \cite{T1} the vanishing of the Faber-Pandharipande cycle on $\CC^2$ is obtained from a tautological relation on $M_{1,3}^{ct}$.
The vanishing of the Gross-Schoen cycle gives $b_{1,2}b_{1,3}-a_1b_{2,3}=0$.
The connection between this relation and Getzler's relation on $\M_{1,4}$ is explained in \cite{T1}.
The next case deals with $n=4$.
There are 10 generators in degree 1,3 and 21 generators in degree 2.
The intersection matrix of the pairing $R^1(\CC^4) \times R^3(\CC^4)$ is invertible.
This shows that tautological groups are of dimension 10 in degrees 1 and 3.
The resulting intersection matrix for $R^2(\CC^4) \times R^2(\CC^4)$ has the form
$$\begin{pmatrix}
I_6 &  & &  &  \\
 & -2I_{12} &  &  &  \\
   &  & 4 & -2 & -2  \\
   &  & -2 & 4 & -2 \\
   &  & -2 & -2 & 4 \\
\end{pmatrix}.$$

This matrix has rank 20. 
The kernel is one dimensional and corresponds to the relation
$$b_{1,2}b_{3,4}+b_{1,3}b_{2,4}+b_{1,4}b_{2,3}=0.$$
This relation can be obtained from Getzler's relation via a pull-back.
It is proven in \cite{T1} that every relation in the tautological ring of $\CC^n$ follows from these relations.
This was used to find all tautological relations for the moduli space $M_{1,n}^{ct}$.
\end{ex}

\begin{ex}
Let $g=2$ and consider $n=2$.
The vanishing of the Faber-Pandharipande cycle gives the relation $b_{1,2}^2+4a_1a_2=0$. 
This is the restriction of a tautological relation found on $\M_{2,2}$ by Getzler \cite{G2}. 
When $n=3$ the vanishing of the Gross-Schoen cycle corresponds to the relation $b_{1,2}b_{1,3}-a_1b_{2,3}=0$.
This is the restriction of the relation on $\M_{2,3}$ found by Belorousski and Pandharipande \cite{BP}.
The last relation contains 15 terms and has the following form:
$$\sum b_{i_1,i_2}b_{i_3,i_4}b_{i_5,i_6}=0.$$
In \cite{T3} we proved that these relations determine the structure of the tautological ring $R^*(\CC_2^n)$ for every $n$. 
\end{ex}

The relations involving $n \leq 3$ points can be used to generate the tautological group of a given degree with elements of the form
\begin{equation}\label{STC}\tag{3}
v:=\prod_{i \in A(v)} a_i \cdot \prod_{j,k \in B(v)} b_{j,k}, \D A(v) \cap B(v)=\emptyset.
\end{equation}

In this situation such element $v$ is said to be a \emph{standard monomial}.
We define $a(v):=\prod_{i \in A(v)} a_i$ and $b(v):=\prod_{j,k \in B(v)} b_{j,k}$.
What we mentioned before means that the tautological group of $\CC^n$ is generated by standard monomials.
Intersection pairings have a simple form if one works with standard monomials.
There is a natural way to associate a standard monomial $w \in R^{n-k}(\CC^n)$ to every standard monomial $v \in R^k(\CC^n)$.
It is simply defined as the following product
$$w:=\prod_{i \in \{1, \dots, n\} \setminus A(v) \cup B(v)} a_i \cdot b(v).$$ 

We say that $v$ and $w$ are dual to each other and write $w=v^*$.
An elementary argument shows that for standard monomials $v_1,v_2$ of the same degree the product $v_1^* \cdot v_2$ vanishes unless they have the same $B$-parts.
This means that interesting blocks of intersection pairings come from matrices having the following form:
Let $m \geq 1$ be an integer and consider all standard monomials of the
form $b_{i_1,i_2} \dots b_{i_{2m-1},i_{2m}}$.
These belong to the tautological group $R^m(\CC^{2m})$.  
Denote by $R_{2m}$ the $\QQ$-vector space generated by these elements.
The permutation group $S_{2m}$ acts on $R_{2m}$ via its natural action on indices. This makes $R_{2m}$ into a representation of the symmetric group $S_{2m}$. 
The decomposition of $R_{2m}$ into irreducible components has the following form:
$$R_{2m}=\oplus_{\la} V_{\la},$$
where $V_{\la}$ is the representation associated to the partition $\la$ and in this sum $\la$ varies over all partitions having only even components.
Notice that all representations appear with multiplicities 1.
In \cite{HW} similar matrices and their eigenvalues are studied by Hanlon and Wales.
It follows from their result that every representation $V_{\la}$ corresponds to an eigenspace of our intersection matrix.
In particular, $V_{\la}$ is in the kernel if and only if the partition $\la$ has a part of length at least $2g+2$.
An elementary argument shows that such representation $V_{\la}$ is generated by an element which is a linear combination of expressions of the form $\al_g$ given in \eqref{Re}.
This shows that the relations found for $n \leq 3$ points together with the degree $g+1$ relation on $\CC^{2g+2}$ generate all tautological relations.
This completes the description of $R^*(\CC^n)$ for every $n$.
It follows that intersection pairings are perfect in all degrees.

Everything we have said about the tautological ring of the moduli space $\CC^n$ can be restricted to a fixed hyperelliptic curve.

\begin{cor}
Let $X$ be any hyperelliptic curve of genus $g$ and $n$ be an integer.
The intersection pairings between tautological classes on $X^n$ are perfect in all degrees. 
\end{cor}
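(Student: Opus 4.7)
The plan is to simply restrict the entire analysis of $R^*(\CC^n)$ carried out above to a fiber over a point $[X] \in \CH_g$. Pull-back along the inclusion $X^n \hookrightarrow \CC^n$ sends the generators $a_i, b_{i,j}$ of $R^*(\CC^n)$ to the analogously defined generators of $R^*(X^n)$, and this restriction is surjective by the very definition of $R^*(X^n)$. Every relation proved in Sections 3--5 --- the vanishings $a_i^2=0$, $a_ib_{i,j}=0$, $b_{i,j}^2+2ga_ia_j=0$, $b_{i,j}b_{i,k}-a_ib_{j,k}=0$, and the degree $g+1$ relation $\al_g=0$ in \eqref{Re} --- therefore continues to hold on $X^n$, because each was established as a relation on a relative product over $\CH_g$ (or over $\W_g$, which covers $\CH_g$).

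The combinatorial core of the previous section now transfers verbatim. Standard monomials \eqref{STC} still span $R^*(X^n)$, and the duality $v \mapsto v^\ast$ still shows that the intersection pairing is block diagonal with blocks indexed by the common $B$-part. Each nontrivial block is a Hanlon-Wales matrix on the $S_{2m}$-representation $R_{2m}$, whose decomposition into irreducibles $V_\la$ (with $\la$ consisting of even parts, each appearing with multiplicity one) is preserved by the intersection form. The arguments leading to this reduction use only the listed relations together with the basic combinatorics of diagonals and divisor pull-backs, none of which depend on whether the base is $\CH_g$ or a point.

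Finally, the Hanlon-Wales theorem identifies the kernel of each block as the sum of those $V_\la$ whose partitions have a part of length at least $2g+2$, and these are precisely the representations already annihilated by $\al_g$ and its $S_n$-translates. Consequently, after imposing our relations, each block becomes invertible, so the pairing on $R^*(X^n)$ is perfect in every degree. Any hypothetical additional relation on $X^n$ not coming from $\CC^n$ would yield a class pairing trivially against every complementary class, and by the nondegeneracy just established such a class must vanish; in particular no further relations can appear. The one place where care is required --- and the step I expect to be the main technical obstacle --- is verifying that the intersection numbers used inside the Hanlon-Wales framework really coincide with those computed on $X^n$ via the degree map; this reduces to checking the basic normalizations $\int_X a_i = 1$ and $\int_{X^2} b_{1,2}^2 = -2g$ directly on the fiber, and confirming that top-degree push-forwards from $\CC^n$ to $\CH_g$ restrict compatibly.
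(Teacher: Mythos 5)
Your proposal is correct and follows essentially the same route as the paper, which disposes of this corollary with the single observation that everything established for $R^*(\CC^n)$ restricts to the fiber $X^n$: the relations, the standard-monomial generators, the block structure of the pairing, and the Hanlon--Wales identification of the kernel all carry over unchanged. Your extra check of the normalizations $\int_X a_i = 1$ and $\int_{X^2} b_{1,2}^2 = -2g$ is a reasonable (and correct) precaution, but it is not an obstacle the paper needed to address separately.
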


\section{The Fulton-MacPherson compactification}

In previous sections we found tautological relations on products of the universal hyperelliptic curve $\CC$ over $\CH_g$.
Those relations suffice to determine the structure of the tautological ring of $\CC^n$ for every $n$.
We now want to include singular curves as well and give a description of $R^*(\CH_{g,n}^{rt})$.
To do this goal we notice that the space $\CH_{g,n}^{rt}$ can be seen as the relative Fulton-MacPherson compactification of $\CC^n$ over the base $\CH_g$.
More precisely, consider the natural map
$$\pi: \CH_{g,n}^{rt} \too \CH_g,$$
which forgets all markings on the curve and contracts all rational components.
The reduced fiber of $\pi$ over a moduli point $[X] \in \CH_g$ is the 
Fulton-MacPherson compactification $X[n]$ of $X$.
From this point of view all boundary divisors appearing in the partial compactification $\CH_{g,n}^{rt}$ can be seen as exceptional divisors of blow-ups 
introduced in the process of separating points.
This identifies the tautological ring of $\CH_{g,n}^{rt}$ with an algebra over $R^*(\CC^n)$. 
Basic relations among exceptional divisors are interpreted as trivial relations on the moduli space. 
This gives a simple description of the tautological ring of $\CH_{g,n}^{rt}$
in terms of the results proven in previous sections.

We first recall the definition of the Fulton-Macpherson compactification of a configuration space in the classical settings.
Let $X$ be a nonsingular algebraic variety defined over an algebraically closed field. For an integer $n \geq 1$ consider the product $X^n$.
The configuration space $F(X,n)$ is the open subset of $X^n$ corresponding to all $n$-tuples consisting of $n$ distinct points.
In \cite{FM} Fulton and MacPherson introduce a compactification of $F(X,n)$ inside the following product
$$X^n \times \prod_{|S| \geq 2} \text{Bl}_{\Delta} X^S,$$
where for each subset $S$ of the set $\{1, \dots , n\}$ Bl$_{\Delta} X^S$ 
is the blow-up of the corresponding product $X^S$ along its small diagonal $\Delta$. 
The resulting space is denoted by $X[n]$. 
This construction is symmetric with respect to the action of the symmetric group
permuting $n$ points.
The space $X[n]$ is an irreducible algebraic variety.
The natural map from it to $X^n$ is proper.
Furthermore, the variety $X[n]$ contains $F(X,n)$ as an open subset and the complement is a divisor with normal crossings.  
This compactification has several advantages over the naive candidate $X^n$.
When there are several points in the configuration space approaching to the same point $x \in X$ the resulting limit in $X^n$ is simply $x$.
In this compactification one remembers how fast these points approach each other.
The original construction of Fulton and MacPherson is inductive.
The starting point is $X[1]:=X$. Assuming that $X[n]$ is already constructed we consider the product $X[n] \times X$. There is a sequence of blow-ups along a collection of disjoint codimension two subvarieties which yields $X[n+1]$.

Another equivalent construction of the compactification $X[n]$ is given in \cite{L}.
The construction consists of a symmetric sequence of blow-ups along diagonal classes on $X^n$. 
We consider a diagonal class $X_I$ for each subset $I$ of $\{1, \dots , n\}$ having at least 2 elements.
It is defined as all points in $X^n$ having equal elements for all coordinates corresponding to the set $I$.
In other words it is the inverse of the small diagonal of $X^{|I|}$ via the natural projection from $X^n$ onto $X^{|I|}$.
We first blow up the small diagonal inside the product $X^n$.
In the next step we blow up all diagonals associated with subsets with $n-1$ elements. 
This process continues and at each step we increase the 
dimension of the blow-up centers by one. 
Notice that in our case we assume that $X$ is a curve and it suffices to deal with subsets $I$ having at least 3 elements. 
The class of the exceptional divisor of the blow-up along $X_I$ and its proper transform under later blow-ups is denoted by $D_I$.

\section{The tautological ring of $X[n]$}

In \cite{FM} the authors show that the Chow ring of $X[n]$ can be described as an algebra over the intersection ring $A^*(X^n)$ of the cartesian product $X^n$.
They also give a description of the ideal of relations.
Notice that we chose a different sequence of blow-ups than the original construction given by Fulton and MacPherson. 
As a result our presentation of the Chow ring has a slightly different form.
The formula of Keel \cite{K} plays an essential role in our computations.
To apply this formula we consider a codimension $d$ closed subvariety $Z$ of an algebraic variety $Y$. 
We assume that the restriction map 
$$A^*(Y) \too A^*(Z)$$
is surjective. 
Let $J_{Z/Y}$ be its kernel so that
$$A^*(Z)=\frac{A^*(Y)}{J_{Z/Y}}.$$

Notice that this property holds for all subvarieties occurring in the process of the Fulton-MacPherson compactification. 
By our assumption the Chern class $c_i(N_{Z/Y})$ of the normal bundle of $Z$ is the restriction of an algebraic cycle $a_i \in A^i(Y)$.
We define a \emph{Chern polynomial} for $Z$ to be
$$P_{Z/Y}(t)=t^d+a_1t^{d-1}+ \dots + a_{d-1}t+a_d \in A^*(Y)[t].$$
We require that $a_d=[Z]$.
Other coefficients $a_i$ are well-defined only modulo the ideal $J_{Z/Y}$.
Denote by $\widetilde{Y}$ the blow-up of $Y$ along $Z$ and let $E$ be the class of the exceptional divisor.
In this situation we get a simple description of the Chow ring of $\widetilde{Y}$ in terms of the Chow ring of $Y$ as follows:

\begin{thm}(Keel)
The Chow ring of $\widetilde{Y}$ is given by
$$A^*(\widetilde{Y})=\frac{A^*(Y)[E]}{\left(J_{Z/Y} \cdot E, P_{Z/Y}(-E)\right)}.$$
\end{thm}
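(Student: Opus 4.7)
The plan is to prove Keel's formula in two stages: verify that both relations hold in $A^*(\widetilde{Y})$, then show the resulting surjection from the quotient is injective by matching the additive structure. Let $\pi: \widetilde{Y} \to Y$ denote the blow-up, $j: E \hookrightarrow \widetilde{Y}$ the exceptional divisor (with $E \cong \mathbb{P}(N_{Z/Y})$), $q: E \to Z$ the bundle projection, $i: Z \hookrightarrow Y$ the original embedding, and $\xi = c_1(\mathcal{O}_{\mathbb{P}(N_{Z/Y})}(1)) = j^*(-E)$. The candidate ring map is
$$\Phi: A^*(Y)[t] \longrightarrow A^*(\widetilde{Y}), \qquad \alpha \mapsto \pi^*\alpha, \quad t \mapsto E,$$
and the task is to identify $\ker \Phi$ with the ideal $(J_{Z/Y} \cdot t, P_{Z/Y}(-t))$.

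For the inclusion $J_{Z/Y} \cdot E \subseteq \ker \Phi$: given $\alpha \in J_{Z/Y}$, the projection formula yields $\pi^*\alpha \cdot E = j_*(j^*\pi^*\alpha) = j_*(q^* i^*\alpha) = 0$, since $i^*\alpha = 0$ by hypothesis. For the relation $P_{Z/Y}(-E) \in \ker \Phi$, I would use the blow-up key formula $\pi^*[Z] = j_*(c_{d-1}(\mathcal{Q}))$, where $\mathcal{Q}$ is the universal quotient bundle on $\mathbb{P}(N_{Z/Y})$. The tautological sequence $0 \to \mathcal{O}(-1) \to q^*N_{Z/Y} \to \mathcal{Q} \to 0$ combined with the Whitney formula gives
$$c_{d-1}(\mathcal{Q}) = \sum_{k=0}^{d-1} q^*c_k(N_{Z/Y})\,\xi^{d-1-k}.$$
Applying $j_*$, using $j_*(\xi^{k}) = (-1)^{k} E^{k+1}$, and invoking the projection formula, the identity $\pi^*[Z] = j_*(c_{d-1}(\mathcal{Q}))$ rearranges exactly into $P_{Z/Y}(-E) = 0$ in $A^*(\widetilde{Y})$.

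Surjectivity of the induced map from the quotient follows from the classical blow-up decomposition: every class on $\widetilde{Y}$ has the form $\pi^*\alpha + j_*\beta$, and by the projective bundle formula $\beta$ is a polynomial of degree at most $d-1$ in $\xi$ with coefficients in $q^*A^*(Z)$. Because $i^*: A^*(Y) \to A^*(Z)$ is surjective by hypothesis, these coefficients lift to $A^*(Y)$, and $j_*(\xi^{k} q^*\gamma) = (-1)^{k} E^{k+1}\,\pi^*\widetilde{\gamma}$ then expresses every class in $\pi^*A^*(Y)[E]$. For injectivity, the two relations cut the free $A^*(Y)$-module $A^*(Y)[E]$ down to $A^*(Y) \oplus \bigoplus_{k=1}^{d-1} A^*(Z) \cdot E^{k}$: the relation $J_{Z/Y} \cdot E = 0$ forces the coefficient of each positive power of $E$ to be well-defined only modulo $J_{Z/Y}$, i.e., as a class on $Z$, while the monic relation $P_{Z/Y}(-E) = 0$ truncates the $E$-degree at $d-1$. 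This matches the additive decomposition of $A^*(\widetilde{Y})$ as an abelian group, so the quotient map is an isomorphism.

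The main obstacle is establishing $P_{Z/Y}(-E) = 0$ in $A^*(\widetilde{Y})$ itself, rather than merely after pullback to $E$ where it is the tautological projective-bundle relation. This requires the excess intersection formula for the blow-up square and a careful bookkeeping of the signs in $j_*(\xi^{k}) = (-1)^{k} E^{k+1}$ to match them precisely against the coefficients of $P_{Z/Y}(-E)$; once this identity is in hand, the remaining arguments reduce to a comparison of graded pieces.
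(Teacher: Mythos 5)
The paper does not actually prove this statement: it is quoted from Keel's paper \cite{K} (where it is proved in the appendix), so there is no internal argument to compare against. Your proposal is essentially Keel's own proof, i.e.\ the standard one, and the two substantive identities are verified correctly: $\pi^*\alpha\cdot E=j_*(q^*i^*\alpha)=0$ for $\alpha\in J_{Z/Y}$, and the rearrangement of the key formula $\pi^*[Z]=j_*\bigl(c_{d-1}(\mathcal{Q})\bigr)$ via $c(\mathcal{Q})=q^*c(N_{Z/Y})\cdot(1-\xi)^{-1}$ and $j_*(\xi^k)=(-1)^kE^{k+1}$ into $P_{Z/Y}(-E)=0$; the surjectivity argument using $A^*(\widetilde{Y})=\pi^*A^*(Y)+j_*A^*(E)$, the projective bundle formula, and the surjectivity of $i^*$ is also right. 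The one step stated too loosely is the injectivity: a surjection between two abstractly isomorphic (generally infinite-rank) groups need not be injective, so ``this matches the additive decomposition'' does not by itself close the argument. What is needed --- and what you only gesture at with ``comparison of graded pieces'' --- is a triangularity statement: on the summand $A^*(Z)\cdot E^k$ ($1\le k\le d-1$) of your upper-bound module, the composite of the map with $j^*$ followed by projection onto the $\xi^{k-1}$-component of $A^*(E)\cong\bigoplus_j\xi^j\,q^*A^*(Z)$ is $\pm\,\mathrm{id}$ plus terms of lower $E$-degree, whence injectivity by downward induction on the power of $E$, with the $E$-degree-zero part handled by $\pi_*\pi^*=\mathrm{id}$. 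Two hypotheses should also be made explicit: the key formula and the decomposition $A^*(\widetilde{Y})=\pi^*A^*(Y)+j_*A^*(E)$ require $Z\hookrightarrow Y$ to be a regular embedding (automatic in the paper's application, where all blow-up centres in the Fulton--MacPherson construction are smooth), and the independence of the ideal from the choice of lifts $a_1,\dots,a_{d-1}$ holds precisely because each such $a_i$ multiplies a positive power of $E$, so ambiguities land in $J_{Z/Y}\cdot E$, while $a_d=[Z]$ is pinned down exactly.
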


This theorem enables us to find a complete description of the Chow ring of $X[n]$ if we know $A^*(X^n)$. 
We need to know a Chern polynomial $P_{Z/Y}$ and the ideal $J_{Z/Y}$ for all subvarieties $Z$ appearing in the construction of the space $X[n]$. 
In our case we are only interested in tautological classes.
There is a natural way to define the tautological ring of $X[n]$.

\begin{dfn}
Let $X$ be a curve and $n$ be a positive integer. 
The tautological ring of the Fulton-MacPherson compactification $X[n]$ of 
the configuration space $F(X,n)$
is defined to be the $\QQ$-subalgebra of the rational Chow ring of $X[n]$ generated by tautological classes on $X^n$ together with the classes of exceptional divisors.
\end{dfn}
 
In \cite{T2} we found the connection between tautological relations on $X[n]$ and the product $X^n$. 
They are divided into 5 classes of relations.
In the following we assume that $Y$ is $X^n$ or the total space of one of its blow-ups in the process of the construction of $X[n]$:

\begin{enumerate}\label{relations}
\item
The most obvious class of relations consists of those coming from the original space $X^n$. Notice that the Chow ring of $X^n$ naturally becomes a subring of the intersection ring of the space $X[n]$.
We use the same letters for cycles in $R^*(X^n)$ and their pull backs to $X[n]$.

\item
Another class of trivial relations comes from the vanishing of products of certain exceptional divisors. 
We have seen that exceptional divisors
correspond to subsets $I$ of $\{1, \dots, n\}$ having at least 3 elements.
Let $D_I$ and $D_J$ be the exceptional divisors associated to the subvarieties $X_I$ and $X_J$ of $X^n$.
Assume that $I \not \subseteq J$, $J \not \subseteq I$ and $I \cap J \neq \emptyset$.
Under these assumptions the proper transforms of the subvarieties $X_I$ and $X_J$ become disjoint in the process of blow-ups.
This happens when we blow up subvarieties corresponding to subsets having 
$\min(|I|,|J|)+1$ elements.
This implies that the product $D_I \cdot D_J$ is zero for such subsets.

\item
For a subvariety $Z$ of $Y$ consider the inclusion map $i:Z \too Y$.
Let $x$ be an element in the kernel of the restriction map $i^*:A^*(Y) \too A^*(Z)$.
Denote by $E_Z$ the class of the exceptional divisor of the blow-up Bl$_Z Y$ of $Y$ along $Z$. We get the relation $x \cdot E_Z=0$ in the Chow ring of Bl$_Z Y$.

\item
Let $V_1,\dots,V_k,Z$ be a collection of blow-up centers.
Assume that the subvarieties $V_1, \dots, V_k$ intersect transversally.
Furthermore, suppose we can write $Z$ as the transversal intersection 
$V_1 \cap \dots \cap V_k \cap W$ for some $W$.
Denote by $E_{\widetilde{V_i}}$ the class of the exceptional divisor of the blow-up along $V_i$.
In this situation we get the relation 
$P_{W/Y}(-E_Z) \cdot E_{\widetilde{V_1}} \dots E_{\widetilde{V_k}}=0$.

\item
Let $Z$ be a blow-up center of $Y$ with a Chern Polynomial $P_{Z/Y}$. 
The class $E_Z$ satisfies the equation $P_{Z/Y}(-E_Z)=0$.
\end{enumerate}

\subsection{Standard monomials and intersection pairings}

The relations described above can be used to obtain a smaller set of generators for tautological groups. 
These generators will be called \emph{Standard monomials}.
We will see that there is an involution in the tautological ring of $X[n]$ 
which gives a one to one correspondence between standard monomials in degrees $d$ and $n-d$.
The intersection pairing 
$$R^d(X[n]) \times R^{n-d}(X[n]) \too \QQ, \D 0 \leq d \leq n$$
has a simple form with respect to this choice of basis for tautological groups.
The existence of a natural filtration on the tautological algebra shows that the resulting intersection matrix becomes triangular.
It consists of square blocks along the main diagonal.
These blocks are intersection matrices of pairings for $X^m$ when $m \leq n$.
This also shows that to study tautological classes on $X[n]$ 
it is enough to restrict to the spaces $X^m$, $m \leq n$.

Every element in the tautological ring is a product of tautological classes on $X^n$ with products of exceptional divisors.
It can be written as follows:

\begin{equation}\label{V}\tag{4}
v:=a(v) \cdot b(v) \cdot \prod_{r=1}^m D_{I_i}^{i_r},
\end{equation}

where $a(v)$, $b(v)$ belong to $R^*(X^n)$ defined before and 
$D_I$ is the exceptional divisor associated with the subvariety $X_I$. 
To define standard monomials and the involution on the tautological ring 
we need to associate a graph to each monomial in $R^*(X[n])$. 
Let $v$ be a monomial as given in \eqref{V}.
We associate a directed graph $\G:=(V_{\G},E_{\G})$ to $v$ as follows:
The vertex set $V_{\G}$ of $\G$ is identified with the set $\{1, \dots, m\}$.
There is an edge $(r,s)$ in $E_{\G}$ from the vertex $r$ to $s$ if the set $I_s$ is a maximal element of the set
$$\{I_i: I_i \subset I_r\}.$$
The degree deg$(i)$ of a vertex $i$ is the number of outgoing edges $(i,j)$ with the starting point $i$.
The closure $\overline{i}$ of a vertex $i \in V_{\G}$ consists of all vertices $r$ such that the inclusion $I_r \subseteq I_i$ holds.
In other words, the vertex $r$ belongs to the closure of $i$ when there is a path in the graph $\G$ from $i$ to $r$.
Notice that each vertex $i$ of $\G$ corresponds to a subset $I_i$ of $\{1, \dots, n\}$.  
A vertex $i$ is called a \emph{root} of $\G$ if the set $I_i$ is a maximal element of the collection 
$\{I_1, \dots, I_m\}$ with respect to inclusion of sets.   
A vertex $i$ corresponding to a minimal subset $I_i$ is called an \emph{external} vertex.
All the other vertices of $\G$ are called \emph{internal} vertices.

Let $v$ be a monomial in the tautological ring of $X[n]$ and denote by $\G$ the associated graph.
Roots of $\G$ correspond to the collection $\{J_1, \dots, J_s\}$ consisting of subsets of $\{1, \dots, n\}$.
For each $1 \leq r \leq s$ let $\alpha_r \in J_r$ be the smallest element. 
Consider the following subset $S$ of the set $\{1, \dots, n\}$:

\begin{equation} \label{S} \tag{5}
S:=\{\alpha_1, \dots, \alpha_s\} \cup ( \cap_{r=1}^m I_r^c).
\end{equation}

In this situation we say that the monomial $v$ is standard if 
$$a(v)b(v) \in R^*(X^{|S|})$$
is a standard monomial according to our former convention \eqref{STC} in Section 3.3.
We also require the following restriction for the powers of exceptional divisors:
$$i_r \leq \text{min}(|I_r|-2,|I_r|-|\cup_{I_s \subset I_r}I_s|+\text{deg}(I_r)-2).$$

The following proposition shows that in studying tautological classes on $X[n]$ we can restrict to standard monomials:

\begin{prop}\label{standard}
Standard monomials form an additive basis for tautological groups in all degrees.
\end{prop}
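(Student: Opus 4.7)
The plan is to establish the spanning and linear independence properties of the standard monomials separately. Spanning uses the five families of relations (1)--(5) catalogued above together with the relations for products of the universal curve proved earlier; linear independence uses the perfect intersection pairings on $X^m$ for $m \leq n$ obtained at the end of Section \ref{Product}.

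For spanning, let $v = a(v) \cdot b(v) \cdot \prod_{r=1}^m D_{I_r}^{i_r}$ be an arbitrary tautological monomial. Relation (2) gives $D_{I_r} D_{I_s} = 0$ whenever $I_r, I_s$ are incomparable but meet, so one may assume the family $\{I_1,\dots,I_m\}$ is laminar; the associated graph $\G$ is then a forest encoding the maximal inclusions, as in the construction preceding the statement. The Chern polynomial relation (5) reduces each $i_r$ to at most $\min(|I_r|-2,\,|I_r|-|\cup_{I_s\subsetneq I_r}I_s|+\deg(I_r)-2)$, where the first term is the codimension of the small diagonal of $X^{I_r}$ in $X^{I_r}$ and the second is its codimension after transversal intersection with the proper transforms of the smaller blow-up centers nested inside $I_r$. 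Relation (3) applied to each $D_{I_r}$ identifies all indices belonging to $I_r$, so $a(v)b(v)$ may be rewritten using only one representative $\alpha_r$ per root of the forest together with the points outside every $I_r$---precisely the index set $S$ defined in equation (5). Finally, the relations proved for $R^*(X^n)$ in Section \ref{Product} (the vanishings $a_i^2=0$, $a_ib_{i,j}=0$, $b_{i,j}^2+2ga_ia_j=0$, the Gross--Schoen relation $b_{i,j}b_{i,k}=a_ib_{j,k}$, and the degree-$g+1$ relation $\al_g=0$) put the remaining factor into the required standard form as an element of $R^*(X^{|S|})$.

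For linear independence, the key is to define a natural duality $v \mapsto v^*$ on standard monomials by extending the $X^n$-duality of Section \ref{Product}: replace the $a$-part $a(v)$ by the product of $a_i$ over the indices in $S \setminus (A(v) \cup B(v))$, keep $b(v)$ unchanged, and replace each $D_{I_r}^{i_r}$ by the complementary power dictated by the Chern polynomial of $X_{I_r}$. A self-intersection computation on each exceptional divisor shows that $v^* \cdot w = 0$ unless $w$ has the same forest $\G$ and the same $B$-part as $v$. Consequently the intersection matrix is block-triangular with respect to the partial order on forests, and each non-trivial diagonal block is proportional to an intersection matrix of the pairing on some $R^*(X^{|S|})$; since those pairings are perfect by the corollary at the end of Section \ref{Product}, the full matrix is non-degenerate and the standard monomials are linearly independent.

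The main obstacle is the bookkeeping in the spanning step: the order of blow-ups used here (centers of increasing dimension, i.e.\ decreasing $|I|$) differs from the original Fulton--MacPherson order, so Chern polynomials and normal bundle data must be recomputed at each stage, and one must verify that the exponent bound in the definition of standard monomial matches exactly the degree of the Chern polynomial of $X_{I_r}$ after all transversal intersections with already-blown-up centers nested inside $I_r$ are accounted for. A secondary difficulty is checking that the identifications forced by $D_{I_r}$ on $a(v)b(v)$ propagate coherently through the nested blow-ups, so that the final reduction really lands in $R^*(X^{|S|})$ with $S$ as prescribed in equation (5). Once this numerical and combinatorial matching is in place, both the reduction to standard form and the duality used in linear independence proceed in a largely formal way.
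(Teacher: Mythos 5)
Your proposal is correct and follows essentially the same route as the paper: the spanning step uses exactly the same reductions (relation (2) to make the index family laminar, relation (3) to collapse $a(v)b(v)$ onto the representative set $S$, relation (5) for the bound $i_r\leq |I_r|-2$, relation (4) for the sharper bound involving $\deg(I_r)$, and the $X^n$ relations for the remaining factor), while the paper's own proof of this proposition covers only generation and defers linear independence to the subsequent triangularity result and the pairing computation in the theorem on $R^*(X[n])$, which your second half reproduces. One minor imprecision: the claim that $v^*\cdot w=0$ unless $w$ has the same forest and $B$-part is stronger than what actually holds (and would give a block-diagonal rather than block-triangular matrix); the correct statement is the one-sided vanishing $v_1\cdot v_2^*=0$ for $D(v_1)<D(v_2)$, which yields the triangular structure you invoke and suffices for the conclusion.
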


\begin{proof}
Let $v$ be a monomial as in Definition \eqref{V}.
For any subset $I$ of the set $\{1, \dots, n\}$
and elements $i,j \in I$ the relations 
$$a_i \cdot D_I=a_j \cdot D_I,  \D b_{i,j} \cdot D_I=-2g a_i \cdot D_I$$ 
hold. 
Under these assumptions for every $k \in \{1, \dots , n\}$
we have that $b_{i,k} \cdot D_I=b_{j,k} \cdot D_I$.
Using these relations we may assume that 
the monomial $a(v)b(v)$ belongs to $R^*(X^{|S|})$ for the index set $S$ given in \eqref{S}.
We may also assume that the power $i_r$ satisfies the inequality 
$i_r \leq |I_r|-2$.
This follows from the last class of relations in \ref{relations}. 
To deal with the last inequality let $\{J_1, \dots, J_s\}$ be the set of maximal elements of the set
$$\{I_i: I_i \subset I_r, \text{where} \ 1 \leq i \leq m\}.$$
From the relations of type (4) in \ref{relations} the monomial 
$D_{I_r}^j \cdot \prod_{i=1}^s D_{J_i}$
can be written as a sum of terms which are strictly less than it,
where $$j=|I_r|-|\cup_{i=1}^s J_i|+s-1=|I_r|-|\cup_{I_s \subset I_r} I_s|+\deg(I_r)-1.$$
This shows that for any $r$ we may assume that 
$$i_r \leq |I_r|-|\cup_{I_s \subset I_r} I_s|+\deg(I_r)-2.$$
\end{proof}

We are now able to define an involution on $R^*(X[n])$ which associates a dual element $v^* \in R^{n-d}(X[n])$ to every standard monomial $v \in R^d(X[n])$.

\begin{dfn}
Let $v$ be a standard monomial as given in \eqref{V} and $\G$ be its associated graph.
Define the subsets $J_1, \dots, J_s$ and $S$ of $\{1, \dots, n\}$ as before.
The set $T$ is defined as
$$T:=S \setminus A_v \cup B_v.$$
The integers $j_r$ for $1 \leq r \leq m$ are defined as follows:

$$j_r:= \left\{ \begin{array}{ll}
|I_r|-|\cup_{I_s \subset I_r}I_s|+\deg(I_r)-1-i_r & \qquad I_r \ \mathrm{is \ an \ internal \ vertex \ of} \ \mathcal{G} \\ \\
|I_r|-1-i_r & \qquad I_r \ \mathrm{is \ an \ external \ vertex \ of} \ \mathcal{G}.  \\
\end{array} \right. $$

We define 
$$v^*:=a(v^*) \cdot b(v^*) \cdot \prod_{r=1}^m D_{I_i}^{j_r},$$
where $a(v^*)=\prod_{i \in T}a_i$ and $b(v^*)=b(v)$.

\end{dfn}

It follows from our definition that the dual of every standard monomial is again standard.
The basic property $v^{**}=v$ shows that this association defines an involution on the tautological algebra. 

\begin{ex}
Let $I$ be a subset of $\{1, \dots, n\}$ with at least 3 elements
and denote by $i$ its smallest element.
The divisor $v:=D_I$ is standard.
Its dual is $v^*=a_iD_I^{n-2}$. 
The intersection product $v \cdot v^*$ is equal to $(-1)^{n-1}$.
\end{ex}

\begin{ex}
Let $I_1, \dots, I_4$ be 4 disjoint subsets of $\{1, \dots ,n\}$ having at least 3 elements.
Let $i_j \in I_j$ be the smallest element for $1 \leq j \leq 4$.
The degree 6 class $v:=b_{i_1,i_2}b_{i_3,i_4} \prod_{i=1}^4D_{I_i}$ is standard.
We have that $v^*=b_{i_1,i_2}b_{i_3,i_4} \prod_{i=1}^4D_{I_i}^{|I_i|-2}$
and $v \cdot v^*=4(-1)^{1+\sum_{i=1}^4 |I_i|}g^2$.
\end{ex}

\begin{ex}
Let $n=20$ and consider the monomial $v=b_{1,2} \cdot \prod_{i=1}^7 D_{I_i}$
in $R^8(X[20])$, where
$$I_1=\{3,4,5\}, \D I_2=\{6,7,8\}, \D I_3=\{9,10,11\}, \D I_4=\{3,4, \dots, 13\},$$
$$I_5=\{14,15,16\}, \D I_6=\{14,15,16,17,18\}, \D I_7=\{14,15, \dots, 20\}.$$

It follows from the definition that $v$ is a standard monomials.
The graph $\G$ associated to $v$ is pictured below.
It has 7 vertices and 5 edges. 
The vertex $I_1$ has degree 3, the vertices $I_5,I_6$ are of degree 2 and other vertices have degree zero.
The graph $\G$ has two roots $I_1$ and $I_5$.
There are 4 external vertices $I_2,I_3,I_4$ and $I_7$ and 3 internal vertices $I_1,I_5,I_6$.
The dual of $v \in R^{12}(X[20])$ is defined as 
$$v^*=b_{1,2}a_3a_{14} \cdot \prod_{r=1}^7 D_{I_r}^{i_r},$$
where $i_4=3$ and all other powers are 1. 
The intersection product $v \cdot v^*$ is equal to $-2g$.

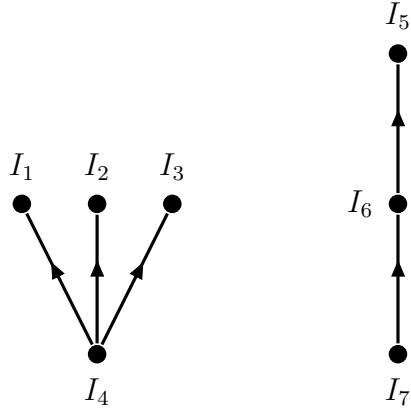
\begin{figure}[htp]
\begin{tikzpicture}
[inner sep=0pt,thick,
    dot/.style={fill=black,circle,minimum size=7pt}]
  [auto=left]
\begin{scope}[very thick, every node/.style={sloped,allow upside down}]
\node (m1) at (2,7.5) {$I_4$};
\node[dot] (n1) at (2,8) {};
\node (m2) at (1,10.5)  {$I_1$};
\node[dot] (n2) at (1,10) {};
\node (m3) at (2,10.5) {$I_2$};
\node[dot] (n3) at (2,10) {};
\node (m4) at (3,10.5) {$I_3$};
\node[dot] (n4) at (3,10) {};
\draw (n1)-- node {\midarrow} (n2);
\draw (n1)-- node {\midarrow} (n3);
\draw (n1)-- node {\midarrow} (n4);
\node[dot] (n5) at (6,8) {};
\node (m5) at (6,7.5) {$I_7$};
\node[dot] (n6) at (6,10) {};
\node (m6) at (5.5,10)  {$I_6$};
\node[dot] (n7) at (6,12) {};
\node (m7) at (6,12.5) {$I_5$};
\draw (n5)-- node {\midarrow} (n6);
\draw (n6)-- node {\midarrow} (n7);
\end{scope}
\end{tikzpicture}
\caption{The graph $\mathcal{G}$}
\end{figure}

\end{ex}

\subsection{The filtration of the tautological algebra}

To compute the intersection matrix of the pairing we consider all standard monomials of degree $d$ and their duals defined before.
The resulting matrix has a triangular structure which simplifies our analysis of the pairing. 
The triangular property is shown to be connected to the vanishing of certain intersection numbers.
These vanishings are better understood via a natural filtration of the tautological ring.
The existence of this filtration was predicted by Looijenga in the previous work \cite{T1} on the tautological ring of $M_{1,n}^{ct}$.
This filtration is defined in terms of a natural ordering of monomials.
We first order the exceptional divisors.

\begin{dfn}\label{<}
Let $I,J$ be subsets of the set $\{1, \dots, n\}$. We say that $I < J$ if 
\begin{itemize}
\item
$|I| < |J|$;

\item
or if $|I|=|J|$ and the smallest element of $I \setminus I \cap J$ is smaller than the smallest element of $J \setminus I \cap J$.
\end{itemize}
This induces an ordering on monomials in $R^*(X[n])$. 
Consider two monomials $v_1,v_2$ in the tautological ring of $X[n]$ as follows:
$$v_1=a(v_1)b(v_1) \cdot \prod_{r=1}^{r_0} D_{I_r}^{i_r} \cdot D, \D
v_2=a(v_2)b(v_2) \cdot \prod_{r=1}^{r_0} D_{I_r}^{j_r} \cdot D,$$
where $D=\prod_{r=r_0+1}^m D_{I_r}^{i_r}$,
for $I_m < \dots < I_1$ and $i_{r_0}<j_{r_0}$;
or if $r_0=0$ and $a(v_1)b(v_1)<a(v_2)b(v_2)$.
Furthermore, we say that $v_1 \ll v_2$ if for any factor $D_I$ of $v_2$ we have that $v_1<D_I$. Notice that $v_1 \ll v_2$ implies that $v_1<v_2$.
\end{dfn}

This ordering of monomials induces a filtration of the tautological ring.

\begin{dfn}\label{fil}
Let $v$ be a standard monomial as given in \eqref{V} and let $J_1, \dots, J_s$ be the roots of its associated graph as before.
The integer $p(v)$ is defined to be the degree of the element
$$a(v)b(v) \cap_{r=1}^s X_{J_r} \in A^*(X^n),$$
which is the same as 
$$\deg a(v)b(v) +\sum_{r=1}^s |J_r|-s.$$
The subspace $F^pR^*(X[n])$ of the tautological ring is defined to be the $\QQ$-vector space generated by standard monomials $v$ satisfying $p(v) \geq p$.
\end{dfn}

It follows immediately from our definition that
$$F^{p+1}R^*(X[n]) \subseteq F^pR^*(X[n]).$$
This shows that the subspaces $F^pR^*(X[n])$ indeed define a filtration for $R^*(X[n])$.
The following vanishing result will be crucial in the analysis of 
intersection pairings:

\begin{prop} \label{3}
\begin{enumerate}
\item 
Let $v \in F^pR^*(X[n])$ and $w \in R^d(X[n])$
be such that $w \ll v$. 
If $p+d>n$ then the intersection product $v \cdot w$ is zero. 
In particular, we have that $F^{n+1}R^*(X[n])$ is zero. 

\item
Let $v_1,v_2$ be standard monomials in $R^d(X[n])$ satisfying $D(v_1) < D(v_2)$.
Then $v_1 \cdot v_2^*=0$. 
\end{enumerate}
\end{prop}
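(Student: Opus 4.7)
The plan is to exploit the essential support of a standard monomial. Although a standard monomial $v$ has total degree $\deg a(v)b(v) + \sum_r i_r$, the product $\prod_r D_{I_r}^{i_r}$ is supported on the preimage in $X[n]$ of the diagonal locus $Z := \cap_{r=1}^s X_{J_r} \subset X^n$, where $J_1, \ldots, J_s$ are the roots of the graph $\G$ of $v$. The filtration index $p(v)$ was cooked up precisely to record that $v$ comes from a class of degree $\deg a(v)b(v)$ on a subvariety of dimension $n - \sum_r |J_r| + s = n - p(v) + \deg a(v)b(v)$; the exceptional-divisor powers $i_r$ use up normal-bundle directions and do not contribute to the horizontal dimension.

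First I would establish the key lemma: for any standard monomial $v$, there is a closed immersion $\iota: \tilde Z \hookrightarrow X[n]$ from an iterated blowup of $Z$ with $\dim \tilde Z = n - p(v) + \deg a(v)b(v)$, and a class $u$ of degree $\deg a(v)b(v)$ on $\tilde Z$, such that $v = \iota_*(u)$. This is proved by a repeated application of Keel's formula, walking down the graph $\G$ from the external vertices toward the roots. At each stage one absorbs a factor $D_{I_r}^{i_r}$ into the normal-bundle data of the corresponding blowup; the standardness bounds $i_r \leq |I_r|-|\cup_{I_s \subset I_r}I_s|+\deg(I_r)-2$ from Proposition \ref{standard} are exactly what ensure $D_{I_r}^{i_r}$ survives the reduction (no further self-intersection is forced by relations of type (4)–(5) in \ref{relations}) and lands on the proper transform of $Z$.

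With the lemma in hand, part (1) follows from the projection formula:
$$v \cdot w = \iota_*\bigl(u \cdot \iota^* w\bigr).$$
The hypothesis $w \ll v$ ensures that every divisor factor of $w$ is strictly smaller than every divisor factor of $v$, so $\iota^* w$ is a well-defined class of degree $d$ on $\tilde Z$ rather than being killed by the incompatibility relations of type (2) in \ref{relations}. Since $u \cdot \iota^* w$ has degree $\deg a(v)b(v) + d$ on a variety of dimension $n - p(v) + \deg a(v)b(v)$, the assumption $p(v) + d > n$ forces this class to vanish. Taking $w=1$ and $d=0$ yields $F^{n+1}R^*(X[n]) = 0$ as a corollary.

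For part (2), the key observation is that the dual $v_2^*$ carries the same set of exceptional divisors as $v_2$, hence the same graph of roots, so $p(v_2^*)$ differs from $p(v_2)$ only through the change of the $a$-part. From $D(v_1) < D(v_2)$ and Definition \ref{<} one reads off directly that $v_1 \ll v_2^*$. A short degree count, using $\deg v_1 = \deg v_2$ together with the explicit description of the dual exponents $j_r$ and the identity $\deg v_2 + \deg v_2^* = n$, yields $p(v_2^*) + \deg v_1 > n$; the strict inequality is precisely what the gap $D(v_1) < D(v_2)$ provides. Part (1) applied to $v = v_2^*$ and $w = v_1$ then gives $v_1 \cdot v_2^* = 0$.

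The main obstacle is the key lemma: organizing the iterated blowup so that Keel's formula produces a clean single-piece pushforward, with every auxiliary term dying via one of the relations (2)–(5) in \ref{relations}. The bookkeeping is a symbolic calculation indexed by the graph $\G$, and once this structural statement is in place, the remainder of the proposition is a dimension count on $\tilde Z$ together with a matching of degrees for the involution $v \mapsto v^*$.
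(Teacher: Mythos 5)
Your part (1) is essentially the paper's own argument, fleshed out: the paper likewise factors $w$ through the space $Y$ obtained before blowing up the root centers $X_{J_1},\dots,X_{J_s}$ and kills $v\cdot w$ by a dimension count against the support $\cap_r X_{J_r}$, which is exactly what your pushforward $v=\iota_*(u)$ from $\tilde{Z}$ encodes. The ``key lemma'' you defer is the real content, but it is the same mechanism, and the dimension count $\deg a(v)b(v)+d>\dim\tilde{Z}$ is the right conclusion.

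Part (2), however, has a genuine gap. You claim that $D(v_1)<D(v_2)$ implies $v_1\ll v_2^*$ and then apply part (1) with $v=v_2^*$, $w=v_1$. This fails: by Definition \ref{<}, the ordering compares the two $D$-parts after stripping off a \emph{common} factor $D=\prod_{r>r_0}D_{I_r}^{i_r}$ consisting of the largest exceptional divisors, and only requires $i_{r_0}<j_{r_0}$ at the first place the exponents differ. So $v_1$ and $v_2$ (hence $v_2^*$, which carries the same divisors $D_{I_r}$) may share the factors $D_{I_r}$ for $r>r_0$, and $D_{I_{r_0}}$ itself when $i_{r_0}\geq 1$; in that case $v_1<D_{I_r}$ is false and $v_1\ll v_2^*$ fails. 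Moreover the inequality $p(v_2^*)+\deg v_1>n$ that part (1) would require is a statement about $v_2$ alone and cannot be extracted from the gap between $v_1$ and $v_2$. The paper's proof repairs precisely this point: it regroups the product $v_1\cdot v_2^*$ as $v\cdot w$, where $w$ collects only those factors $D_I$ of $v_1\cdot v_2^*$ that are strictly smaller than $D_{I_{r_0}}$, and $v$ absorbs everything else (the shared large divisors, both powers of $D_{I_{r_0}}$, and the $a$- and $b$-parts). Then $w\ll v$ holds by construction, and the strict inequality $j_{r_0}>i_{r_0}$ enters the degree count $\deg(w)+p(v)>n$ through the term $j_{r_0}-i_{r_0}\geq 1$. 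You need this regrouping; the direct application of part (1) to the pair $(v_2^*,v_1)$ does not go through.
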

\begin{proof}
Let $J_1, \dots ,J_s$ be as in Definition \ref{fil}.
Denote by $Y$ the space before blowing up the subvarieties $X_{J_1}, \dots , X_{J_s}$ and by $\widetilde{Y}$ the resulting blow-up space.
The corresponding morphism is denoted by 
$$\pi: \widetilde{Y} \rightarrow Y.$$
The product $v \cdot w$ vanishes since it is the pull-back of zero via $\pi^*$. 
This proves the first claim.

For the second part it is enough to write $v_1 \cdot v_2^*$ as a product $v \cdot w$, for $v,w \in R^*(X[n])$ satisfying the properties stated in the first part.
To find $v$ and $w$, let $v_1,v_2$ be given as in Definition \ref{<}, and denote by $\{J_1, \dots ,J_s\}$ the set of roots of the graph associated to the monomial 
$$D=\prod_{r=r_0+1}^{m} D_{I_r}^{i_r}.$$

By relabelling the roots we may assume that there is an $s_0 \geq 0$ such that $J_r \subset I_{r_0}$ for $1 \leq r \leq s_0$, 
and the equality $I_{r_0} \cap J_r=\emptyset$ holds for $s_0 < r \leq s$. 

Let $w$ be the product of all monomials $D_I$ in $v_1 \cdot v_2^*$ which are strictly less than $D_{I_{r_0}}$ and $v$ 
be the product of the other factors, so that the equality 
$v_1 \cdot v_2^*=v \cdot w$ holds.
Notice that $w \ll v$, by the definition of $v$ and $w$. 
We need to calculate the degree of $w$.
This is done by calculating the degree of $v$ and using the equation $$\deg(w)=n-\deg(v).$$

Roots of the graph associated to $v$ consist of 
$s-s_0+1$ vertices associated to the subsets 
$I_{r_0}, J_{s_0+1}, \dots, J_s$.
The degree $\deg(v)$ of $v$ is therefore equal to the following sum:
$$|I_{r_0}|-1+\sum_{r=s_0+1}^s (|J_r|-1).$$

It follows that
$$\deg(w)=n+1+j_{r_0}-i_{r_0}+s-s_0-|I_{r_0}|-\sum_{r=s_0+1}^s |J_r|
> n+1+s-s_0-|I_{r_0}|-\sum_{r=s_0+1}^s |J_r|.$$ 
On the other hand from Definition \ref{fil} we have that
$$p(v) \geq |I_{r_0}|+\sum_{r=s_0+1}^s |J_r|-s+s_0-1.$$
From the inequality $\deg(w)+p(v)> n$ we see that the intersection product $v \cdot w$ is zero.
\end{proof}

\begin{thm}\label{X[n]}
Let $X$ be a hyperelliptic curve of genus $g$ and $n$ be an integer.
Denote by $R^*(X[n])$ its tautological ring.
The intersection pairings
$$R^d(X[n]) \times R^{n-d}(X[n]) \too \QQ$$
are perfect for all $0 \leq d \leq n$.
The space of relations is generated by the vanishing of the following cycles:

\begin{itemize}

\item
The Faber-Pandharipande cycle,

\item
The Gross-Schoen cycle,

\item 
All product of the form $D_I \cdot D_J$ for subsets $I,J$ of $\{1, \dots, n\}$
whenever $I \not \subseteq J$, $J \not \subseteq I$ and $I \cap J \neq \emptyset$.

\item
All products of the form $x \cdot D_I$,
where $I$ is a subset of $\{1, \dots , n\}$ with at least 3 elements and
$x$ has the form 
$d_{i,j}+K_j$ or $d_{i,k}-d_{j,k}$ 
for $i,j \in I$ and $k \in \{1, \dots , n\} \setminus I$.

\end{itemize}

\end{thm}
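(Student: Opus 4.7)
The plan is to combine the three preparatory results established in this section: the standard monomial basis from Proposition \ref{standard}, the duality involution $v \mapsto v^*$, and the filtration of $R^*(X[n])$ together with the vanishing statement of Proposition \ref{3}. The perfect pairing statement then becomes an analysis of the intersection matrix in the standard monomial basis, and the description of the relations becomes a book-keeping exercise justifying the reduction to standard form.

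First, I would form the intersection matrix of $R^d(X[n]) \times R^{n-d}(X[n])$ using the basis of standard monomials of degree $d$ for the columns, and indexing the rows by their duals $v^*$. Ordering standard monomials according to Definition \ref{<} groups together those with the same exceptional-divisor part $D = \prod_{r} D_{I_r}^{i_r}$. By Proposition \ref{3}(2), the entry $v_1 \cdot v_2^*$ vanishes whenever $D(v_1) < D(v_2)$, so with respect to this ordering the matrix decomposes into a block-upper-triangular form whose diagonal blocks are indexed by the exceptional-divisor monomials $D$.

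Second, I would examine each diagonal block. Fixing $D$, a standard monomial with this exceptional part has the shape $a(v) b(v) \cdot D$ with $a(v) b(v)$ a standard monomial on $X^{|S|}$, where $S$ is the set in \eqref{S}. Applying Keel's formula along the iterated blow-up recipe used to construct $X[n]$, the contribution of $D \cdot D^*$ is, up to sign, the top Segre class of the normal bundle of the corresponding transversal intersection of centers, which evaluates to a nonzero rational scalar depending only on $D$. Stripping this scalar, each diagonal block reduces to the intersection pairing
$$R^*(X^{|S|}) \times R^*(X^{|S|}) \too \QQ$$
which is perfect by the corollary closing Section \ref{Product}. Non-degeneracy of each diagonal block then implies non-degeneracy of the full block-triangular matrix, proving that the pairings $R^d(X[n]) \times R^{n-d}(X[n]) \to \QQ$ are perfect for all $d$.

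Third, for the presentation of the relation ideal, I would argue that the listed classes together with the previously identified tautological relations on $R^*(X^n)$ (the Faber-Pandharipande and Gross-Schoen vanishings, which already encode, up to combinations, all relations appearing on products of the universal hyperelliptic curve as shown in Section \ref{Product}) are exactly what is needed to reduce an arbitrary monomial in $R^*(X[n])$ to a $\QQ$-linear combination of standard monomials. The $D_I \cdot D_J = 0$ relations correspond to type (2) in the enumeration \ref{relations}; the relations $x \cdot D_I = 0$ with $x = d_{i,j} + K_j$ or $x = d_{i,k} - d_{j,k}$ generate the ideal $J_{X_I/Y}$ needed for the type (3) relations, and iteration of these together with the pulled-back relations of type (1) yields the type (4) and type (5) Chern-polynomial relations as well. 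Combined with the perfect-pairing conclusion, any further tautological relation would produce a nontrivial kernel in one of the diagonal blocks, which is impossible.

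The main obstacle will be the Segre-class computation in the second step: one must verify carefully that $D \cdot D^*$ gives a \emph{nonzero} scalar and that the residual pairing on the block indexed by $D$ matches the pairing on $R^*(X^{|S|})$, rather than some twisted version. This hinges on the precise choice of the dual exponents $j_r$ in the definition of $v^*$, combined with a transversality argument for the iterated blow-up centers coming from subsets $I_r$, which is where the constraint $i_r \leq |I_r| - |\cup_{I_s \subset I_r} I_s| + \deg(I_r) - 2$ on standard monomials plays its role.
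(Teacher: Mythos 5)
Your proposal follows essentially the same route as the paper: reduce to the standard monomial basis of Proposition \ref{standard}, use the ordering and Proposition \ref{3} to get a block-triangular intersection matrix, identify each diagonal block (up to a nonzero scalar, which the paper pins down as the sign $(-1)^{\epsilon}$ with $\epsilon=|\cup_r I_r|+\sum_i \deg(i)$) with the perfect pairing on $R^*(X^{|S|})$, and then deduce completeness of the listed relations from perfectness, with the reduction of the type (4) and (5) relations to the trivial type (2) relations handled exactly as in the following section of the paper. The argument is correct and matches the paper's proof.
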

\begin{proof}
We have proved in Proposition \ref{standard} that tautological groups are generated by standard monomials.
Consider a basis for the vector space $R^d(X[n])$ consisting of standard monomials of degree $d$. 

It follows from Proposition \ref{3} that the intersection matrix of the pairing between tautological classes has a triangular structure with respect to our ordering of generators.
To finish the study of the pairing we need to analyze the blocks along the main diagonal.
We will see that these are intersection matrices of the pairings of 
$R^*(X^{|S|})$
for various subsets $S$ of $\{1, \dots, n\}$.
In other words we need to understand the block corresponding to standard monomials having the same $D$-part.
Let $D$ be any such monomial.
It is a product of the divisor classes $D_I$.
Consider the graph $\G$ associated to the monomial $D$.
The subset $S$ of the set $\{1, \dots, n\}$ is defined as in \ref{S}.
Now suppose that $v_1,v_2$ are standard monomials satisfying $D(v_1)=D(v_2)$. 
It is easy to see that the intersection numbers
$$v_1 \cdot v_2^* \in R^n(X[n]) \cong \QQ$$
and the number
$$a(v_1)b(v_1) \cdot a(v_2^*)b(v_2^*) \in R^{|S|}(X^{|S|}) \cong \QQ$$
differ by $(-1)^{\epsilon}$, where 
$$\epsilon=|\cup_{r=1}^m I_r|+\sum_{i \in V(\G)} \deg(i).$$

This shows that square blocks on the diagonal of the intersection matrix are related to the pairings of $R^*(X^{|S|})$ for various $S$ in a simple way. 
In particular, we see the direct connection between tautological relations on $X[n]$ and the products $X^{|S|}$.
We have studied the pairings for $R^*(X^n)$ for all $n$ and we have found all tautological relations.
This shows the same result for $X[n]$ as well.

We need to prove that the relations stated above form a set of generators.
It is not clear that all relations described in \ref{relations} follow from these relations.
The argument in the next section shows this.
\end{proof}

\section{The tautological ring of $\CH_{g,n}^{rt}$}

We will show that the tautological ring of the moduli space $\CH_{g,n}^{rt}$ has the same description as $R^*(X[n])$
studied in the previous section.
Let $X$ be a smooth hyperelliptic curve of genus $g$ as before.
Consider the natural map
$$F:X[n] \too \CH_{g,n}^{rt}.$$
Tautological classes on both sides are connected to each other via the following relations:
$$F^*(K_i)=K_i \  \text{for} \ 1 \leq i \leq n,$$
$$F^*(D_{i,j})=d_{i,j}-\sum_{i,j \in I}D_I, \D
F^*(D_I)=D_I \ \text{when} \ |I| \geq 3.$$

As a result we get a ring homomorphism between tautological algebras:
$$F^*: R^*(\CH_{g,n}^{rt}) \too R^*(X[n]).$$
We want to prove that this homomorphism identifies $R^*(\CH_{g,n}^{rt})$
with the tautological ring of the fiber $X[n]$.
It is clear from the definition that $F^*$ is surjective.
We also need to show the injectivity of this map.
Notice that both rings are generated by divisors and there is a natural bijection between these generators.
To prove the injectivity it is enough to verify that for every relation among divisors on the fiber $X[n]$ the corresponding divisors on $\CH_{g,n}^{rt}$ satisfy the same relation.

\begin{enumerate}

\item
Our arguments in sections 2 and 3 show that all tautological relations on the fiber $X^n$ hold on the moduli space $\CC^n$.
Their pull back to $\CH_{g,n}^{rt}$ via the contraction map
$$\CH_{g,n}^{rt} \too \CC^n$$
establishes the first class of relations.

\item We now consider the first class of relations among the divisor classes $D_I$. 
The product $D_I \cdot D_J \in R^2(\CH_{g,n}^{rt})$ is zero unless 
$$I \subseteq J, \mathrm{or} \qquad  J \subseteq I, \mathrm{or} \qquad I \cap J = \emptyset.$$
Notice that this holds even in $R^*(\overline{\CH}_{g,n})$ for all subsets $I,J$ with at least two elements. 
In (2) we only consider subsets with at least three elements. 
 
\item
Let $I \subset \{1,\dots,n\}$ be a subset with $|I| \geq 3$ and $i_I:X_I \rightarrow X^n$ denote the inclusion. 
The $\ker(i_I^*)$ is generated by the divisor classes 
$d_{i,j}+K_i$ and $d_{i,k}-d_{j,k}$ 
for distinct elements $i,j \in I$ 
and $k$ in the complement of $I$.  
We will prove that 
$$( \psi_i + \sum_{i \in J, j \notin J} D_J) \cdot D_I \D
\text{and} 
 \ (\sum_{i,j \in J} D_J - \sum_{i,k \in J} D_J) \cdot D_I$$ 
are zero in $R^2(\CH_{g,n}^{rt})$. 
The first equality follows from the well-known formula for the $\psi$ classes in genus zero, which we recall: 
Let $i \in \{1, \dots , n\}$ be an element and assume that $j,k \in \{1, \dots, n\} \backslash \{i\}$ are arbitrary distinct elements. Then one has the following equality in 
$A^1(\overline{M}_{0,n})$: $$\psi_i=\sum_{\substack{i \in I \\j,k \notin I}}D_I.$$ 
The second relation is an easy implication of the relations proved in the previous part.  

\item Let $V=V_1 \cap \dots \cap V_k,W$ and $Z$ be subvarieties of $X^n$ as in part (4) in \ref{relations}, so that $V \cap W=Z$. 
After possibly relabeling the indices, we can assume that 
$$Z=X_{I_0}, \qquad V_i=X_{I_i}, \ \mathrm{for} \ 1 \leq i \leq k, \qquad W=\prod_{i=2}^{r_1} d_{1,i} \cdot \prod_{j=1}^k d_{1,r_j+1} ,$$
where $1 \leq r_1 < \dots < r_{k+1} \leq n, I_0=\{1, \dots ,r_{k+1}\}$, and $I_i=\{r_i+1, \dots , r_{i+1}\}$ for $1 \leq i \leq k$. 
From these data we get the relation 
$$P_{W/X^n}(-\sum_{I_0 \subseteq I} D_I) \cdot \prod_{i=1}^k D_{I_i}=0$$ on the space $X[n]$,
for $$P_{W/X^n}(t)=\prod_{i=2}^{r_1} (t+d_{1,i}) \cdot \prod_{j=1}^k (t+d_{1,r_j+1}).$$ 
We want to prove a similar identity on the moduli space $\CH_{g,n}^{rt}$.
This is proven by showing that any monomial in the expansion of this expression is zero. 
Consider any such monomial. 
It has the form
$$\prod_{i=2}^{r_1} D_{J_i} \cdot \prod_{j=1}^{k} D_{J_{r_j+1}} \cdot D_{I_j},$$

where

$$1,i \in J_i \ \text{for} \ 2 \leq i \leq r_1, \D 1,r_{j+1} \in J_{r_i+1} \ \text{for} \ 1 \leq j \leq k.$$

Assume that the product above is not zero.
Notice that the subsets $J_i$ are not disjoint since 1 is in their intersection.
This means that for any two indices $i$ and $j$ one of the inclusions 
$J_i \subseteq J_j$ or $J_j \subseteq J_i$ holds. 
When $1 \leq j \leq k$ the non vanishing of the product $D_{J_{r_j+1}} \cdot D_{I_j}$ implies that 
$I_j  \subseteq J_{r_j+1}$. 
This means that 
$\cup_{j=1}^{k}I_j \subseteq \cup_{j=1}^k J_{r_j+1}$.
Since $1,i \in J_i$ for $2 \leq i \leq r_1$ we see that the union of all $J_i$'s contains $I_0$.
But this union coincides with one of the $J_i$'s.
This means that $I_0 \subseteq J_i$ for some $i$.
But the term $D_{J_i}$ for such set $J_i$ is excluded from the expression above.
This contradiction shows the desired vanishing.

\item Let $I \subset \{1,\dots,n\}$ be a subset with $|I| \geq 3$ corresponding to the subvariety $Z=X_I$ of $X^n$. The relation 
$$P_{Z/X^n}(-\sum_{I \subseteq J}D_J)=0$$ on the space $X[n]$ holds. 
To prove the corresponding relation on the moduli space we will show that the product $$\prod_{i \neq j \in I} (d_{i,j}- \sum_{I \subseteq J} D_J)$$ is zero, 
where $i \in I$ is an arbitrary element.
This is verified as in the previous case by showing that all monomials occurring in the expansion of the expression above are zero. 
In the proof we only need to use relations of type (2).
\end{enumerate}

Notice that our proof shows that seemingly complicated relations of type (4) and (5) are formal consequences of the trivial relations of type (2).
This completes the proof of Theorem \ref{X[n]} as well.

The argument above proves that $F^*$ is indeed an isomorphism. 
This shows that the restriction map induces an isomorphism between the tautological rings involved and finishes the proof of Theorem \ref{X}.

\section{Tautological cohomology via monodromy}  

We have seen a complete description of
relations among tautological classes in the Chow ring of $\CH_{g,n}^{rt}$.
From the Gorenstein property of the tautological ring there will be no more relations between tautological cycles in cohomology.
More precisely, consider the cycle class map
$$\text{cl}: A^*(\CH_{g,n}^{rt}) \too H^*(\CH_{g,n}^{rt}).$$

The image of $R^*(\CH_{g,n}^{rt})$ is called the tautological cohomology of $\CH_{g,n}^{rt}$ and is denoted by $RH^*(\CH_{g,n}^{rt})$.
From Theorem \ref{X} we get the following isomorphism:
$$\text{cl}: R^*(\CH_{g,n}^{rt}) \stackrel{\sim}{\rightarrow} RH^*(\CH_{g,n}^{rt}).$$ 

This shows that the tautological cohomology ring $RH^*(\CH_{g,n}^{rt})$ has Poincar\'e duality as well.
Now let $X$ be a smooth hyperelliptic curve of genus $g$.
The tautological cohomology $RH^*(X[n]) \subset H^*(X[n])$ is defined analogously. 
The cycle class map identifies the tautological algebras of 
$X[n]$ in Chow and cohomology.
We therefore get the following diagram of isomorphisms from Theorems \ref{X} and \ref{X[n]}:

\begin{equation}\label{square}\tag{6}
\begin{CD}
R^*(\CH_{g,n}^{rt}) @>F^*>> R^*(X[n]) \\
@VV\text{cl}V @VV\text{cl}V\\
RH^*(\CH_{g,n}^{rt}) @>F^*>> RH^*(X[n])
\end{CD}
\end{equation}

We want to identify these isomorphic algebras with monodromy invariant classes in the rational cohomology of $X[n]$. 
This follows from a result of Petersen and Tommasi proved for arbitrary curves which we will recall below.

Let $X$ be a smooth curve of genus $g$.
Recall that there is a natural action of the symplectic group $\mathfrak{Sp}(2g,\QQ)$ on the rational cohomology ring of $X$.
This induces an action of this group on the cohomology of $X[n]$ as well.
As it was observed in \cite{PT} the tautological cohomology of $X[n]$ can be identified with monodromy invariant classes in cohomology:

\begin{prop}
Let $X$ be a smooth projective curve of genus $g$.
The subalgebra $H^*(X[n])^{\mathfrak{Sp}(2g,\QQ)}$ of monodromy invariant classes coincides with the tautological cohomology ring $RH^*(X[n])$.
\end{prop}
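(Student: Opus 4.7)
The plan is to reduce to the case of $X^n$ via the iterated blow-up description of $X[n]$ given in Section 7, and to settle the $X^n$ case using classical invariant theory for the symplectic group. Tautological classes are algebraic and hence $\mathfrak{Sp}(2g,\QQ)$-invariant automatically, so only the inclusion $H^*(X[n])^{\mathfrak{Sp}(2g,\QQ)} \subseteq RH^*(X[n])$ requires genuine work.

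For $X^n$, I would apply the K\"unneth formula $H^*(X^n) \cong H^*(X)^{\otimes n}$. The $\mathfrak{Sp}(2g,\QQ)$-action is trivial on $H^0(X)$ and $H^2(X)$, while it is the standard representation $V$ on $H^1(X)$, so the computation of invariants reduces to determining $(V^{\otimes m})^{\mathfrak{Sp}}$ for various $m$. By Weyl's first fundamental theorem for the symplectic group, this invariant space is spanned by tensors built from the symplectic form $\omega \in \wedge^2 V$. Reassembling via K\"unneth, $\omega$ is exactly the $(1,1)$-K\"unneth component of the diagonal class $d_{i,j}$ on $X^2$, while $H^2$-factors contribute the canonical classes $K_i$. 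Hence $H^*(X^n)^{\mathfrak{Sp}(2g,\QQ)}$ coincides with the subring generated by the $K_i$ and $d_{i,j}$, which is $RH^*(X^n)$.

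To bootstrap to $X[n]$, I would induct along the iterated blow-up sequence that produces $X[n]$ from $X^n$. At each stage the blow-up center is a diagonal $X_I \cong X^{n-|I|+1}$, whose invariant cohomology is already tautological by the previous step (or by the inductive hypothesis). Keel's formula then reads
$$H^*(\widetilde{Y}) \cong H^*(Y) \oplus \bigoplus_{k=1}^{c-1} H^*(X_I) \cdot E^k,$$
where $E$ is the exceptional divisor class and $c$ the codimension. The class $E$ is algebraic in the relative setting (the blow-up commutes with the family of curves deforming $X$), hence monodromy invariant; the normal bundle $N_{X_I/Y}$ decomposes as a sum of pullbacks of $T_X$, whose Chern classes lie in $H^2$-components and are also invariant. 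Thus Keel's decomposition is $\mathfrak{Sp}(2g,\QQ)$-equivariant, and the invariants of $H^*(X[n])$ are generated by pullbacks from $H^*(X^n)^{\mathfrak{Sp}(2g,\QQ)}$ together with the exceptional divisors $D_I$ --- precisely the generators of $RH^*(X[n])$.

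The main obstacle is checking the $\mathfrak{Sp}$-equivariance of Keel's decomposition uniformly across every blow-up step, and verifying that no ``shifted'' piece $H^*(X_I) \cdot E^k$ produces an invariant class escaping the tautological subring. The structural reason this works is that the only geometric data entering the blow-up formula --- exceptional classes and normal-bundle Chern classes --- lie in pieces of cohomology on which monodromy acts trivially, so $\mathfrak{Sp}$-invariants in $H^*(\widetilde Y)$ decompose as $\mathfrak{Sp}$-invariants in each summand. Once this observation is made precise and propagated through the induction, the proposition reduces cleanly to Weyl's first fundamental theorem for the symplectic group applied to $H^*(X^n)$.
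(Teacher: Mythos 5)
The paper does not actually prove this proposition: it is quoted as a result of Petersen and Tommasi, with the proof deferred to \cite{PT}. Your argument is, in outline, the standard one and essentially the one in that reference, so it is a genuine proof where the paper offers only a citation. The core steps are all sound: K\"unneth plus Weyl's first fundamental theorem identifies $H^*(X^n)^{\mathfrak{Sp}(2g,\QQ)}$ with the span of monomials in the point classes and in the K\"unneth $(1,1)$-components $\omega_{i,j}$ of the diagonals, and this span is exactly the subring generated by the $K_i$ and $d_{i,j}$ (for $g\geq 2$, where $K_i$ is a nonzero multiple of the point class; products such as $\omega_{i,j}\omega_{i,k}$ fold back into $pt_i\,\omega_{j,k}$, so the subring is no larger than the invariant span). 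The blow-up decomposition is monodromy-equivariant because the entire Fulton--MacPherson construction is carried out relative to the family, and invariants commute with the direct sum since $\mathfrak{Sp}(2g,\QQ)$ is reductive.

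Two points need tightening, though neither is a gap in the strategy. First, ``algebraic hence $\mathfrak{Sp}$-invariant'' is not a valid implication; the correct reason the tautological generators are invariant is that they extend to classes on the total space of the family (equivalently, $pt_i$, $\omega_{i,j}$ and the $D_I$ are visibly fixed by the symplectic group). Second, after the first stage of the construction the blow-up centre is not $X_I\cong X^{n-|I|+1}$ itself but its proper transform, which is an iterated blow-up of $X^{n-|I|+1}$ along smaller diagonal strata (possibly a disjoint union of such); the induction must therefore be organised on the number of points, with the inductive hypothesis applied to these proper transforms, and you should record that $H^*(Y)^{\mathfrak{Sp}}\to H^*(Z)^{\mathfrak{Sp}}$ stays surjective (exactness of invariants for a reductive group), so that every invariant class on the centre lifts to a tautological class on the ambient space and the shifted summands $H^{*-2k}(Z)^{\mathfrak{Sp}}\cdot E^k$ land in $RH^*(X[n])$.
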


In particular, $RH^*(X[n])$ is a Gorenstein algebra for every curve $X$ and a natural number $n$.
This is very different from what we know in Chow as $R^*(X^n)$ and $R^*(X[n])$ don't have Poincar\'e duality for generic curves already when $n=2$. However for hyperelliptic curves we proved the isomorphism between tautological rings in Chow and cohomology.
This shows that all isomorphic algebras in the diagram \ref{square} 
can be naturally identified with monodromy invariant classes.
This finishes the proof of Corollary \ref{M}. 

\section{A remark on Pixton's conjecture}

Our results in previous sections deal with relations among tautological classes on the space $\CH_{g,n}^{rt}$.
In this section we want to use the found relations to get relations on the space $\M_{g,n}$. This reformulation uses the following ingredients:
the formula for the class of hyperelliptic curves and the natural evaluation map on the space of curves of rational tails.
Then we raise questions about the connection between the resulting relations 
and Pixton's recent conjecture.

The class of the hyperelliptic locus inside $M_g$ is calculated by Mumford in the seminal paper \cite{M}. 
It is a class of degree $g-2$ and the formula is a polynomial in terms of lambda and kappa classes.
Recall that the lambda class $\la_i$ for $0 \leq i \leq g$ is defined as
the Chern class $\la_i:=c_i(\E)$ of the Hodge bundle $\E$.
On the other hand we know that the tautological group $R^{g-2}(M_g)$ has dimension one.
Indeed the lambda class $\la_{g-2}$ is a generator.
Faber \cite{F1} calculated the explicit multiplicity:

\begin{equation}\label{H}\tag{7}
[\CH_g]=\frac{2^{2g}-1}{g(g+1)(4g^2-1)|B_{2g-2}|}\la_{g-2} \in R^*(M_g),
\end{equation}

where $B_{2g-2}$ is the Bernoulli number.
Let us explain how we get relations on $\M_{g,n}$ from those on hyperelliptic curves.
We have seen that the vanishing of the Faber-Pandharipande cycle on $\CH_{g,2}^{rt}$ is equivalent to the relation 
$$K_1K_2=(2g-2)K_1d_{1,2}$$
where $d_{1,2}$ is the diagonal class.
We now use the formula for the class of the hyperelliptic locus. 
If we rewrite this relation in terms of standard tautological classes on $M_{g,2}^{rt}$ we get that
$$(\psi_1\psi_2-(2g-1)\psi_1^2) \cdot \la_{g-2}=0 \in R^g(M_{g,2}^{rt}).$$
To obtain a relation in $R^*(\M_{g,2})$ we use the evaluation 
$$A^*(M_{g,n}^{rt}) \too \QQ, \D \epsilon \too \int_{\M_{g,n}} \epsilon \cdot \la_{g-1} \la_g.$$
Recall that this map defines a well-defined evaluation on $A^*(M_{g,n}^{rt})$.
The resulting equality becomes
$$\int_{\M_{g,2}} (\psi_1\psi_2-(2g-1)\psi_1^2) \la_{g-2}\la_{g-1}\la_g=0.$$
This is a well-known identity among the simplest type of Hodge integrals.
It can be easily shown using the string equation.
However notice that this last identity does \emph{not} prove the vanishing of the Faber-Pandharipande cycle on $\CH_{g,2}^{rt}$.
We can apply the same method to other relations.
The vanishing of the Gross-Schoen cycle gives a relation of degree $3g-1$ on $\M_{g,3}$.
The resulting relation is slightly more complicated than the previous one.
On the space $\CH_{g,3}^{rt}$ it has the following form:
$$d_{1,2}d_{1,3}-\frac{1}{2g-2}(K_1d_{2,3}+K_2d_{1,3}+K_3d_{1,2})+\frac{1}{(2g-2)^2}(K_1K_2+K_1K_3+K_2K_3).$$
where,
$$d_{i,j}=D_{i,j}+D_{1,2,3}, \D K_i=\psi_i-\sum_{i \in I}D_I.$$
The divisor $D_I$ corresponds to curves with the marking set $I$ on the rational component. 
To obtain a relation on $\M_{g,3}$ we proceed as before.
The resulting relation seems less obvious to prove with known methods.
In the same way we get a tautological relation of degree $4g-2$ on $\M_{g,2g+2}$.
The resulting relation is symmetric with respect to the $2g+2$ points.
It has the following form:

\begin{thm}
The following relation holds in $R^{4g-2}(\M_{g,2g+2})$:
$$\left (\sum_{\mathcal{I}} b_{i_1,i_2} \dots b_{i_{2g+1},i_{2g+2}} \right) \cdot \la_{g-2}\la_{g-1}\la_g=0.$$
Each term of the sum corresponds to a partition $\mathcal{I}$ of $\{1, \dots, 2g+2\}$ into $g+1$ subsets with 2 elements. 
\end{thm}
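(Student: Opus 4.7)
My plan is to derive the stated $\M_{g,2g+2}$-level identity in three passes from the hyperelliptic-locus relation on $\CH_{g,2g+2}^{rt}$. The first pass produces the degree $g+1$ relation on the hyperelliptic moduli, the second transfers it to the rational-tails moduli via the Mumford-Faber formula for $[\CH_g]$, and the third promotes it to the full Deligne-Mumford space by multiplication with $\la_{g-1}\la_g$.

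For the first pass I would invoke the symmetric vanishing \eqref{Re} of $\al_g=\sum_{\mathcal{I}} b_{i_1,i_2}\cdots b_{i_{2g+1},i_{2g+2}}$ in $R^{g+1}(\CH_{g,2g+2}^{rt})$, proved in Section \ref{g+1}. Since each $b_{i,j}$ is a polynomial in the tautological divisors $K_i$, $d_{i,j}$ (and boundary divisors $D_I$) that are defined not only on the hyperelliptic locus but on all of $M_{g,2g+2}^{rt}$, the same expression defines a class $\tilde\al_g\in R^{g+1}(M_{g,2g+2}^{rt})$. Its restriction along the inclusion $\iota:\CH_{g,2g+2}^{rt}\hookrightarrow M_{g,2g+2}^{rt}$ vanishes, so the projection formula gives
$$\tilde\al_g\cdot\iota_*[\CH_{g,2g+2}^{rt}] \;=\; \iota_*(\iota^*\tilde\al_g) \;=\; 0 \;\in\; R^{2g-1}(M_{g,2g+2}^{rt}).$$

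For the second pass I would identify $\iota_*[\CH_{g,2g+2}^{rt}]$ with $\pi^*[\CH_g]$ along the forgetful morphism $\pi:M_{g,2g+2}^{rt}\to M_g$, using that $\CH_{g,2g+2}^{rt}=\pi^{-1}(\CH_g)$ with its induced reduced structure and the correct relative dimension. Substituting Faber's closed form \eqref{H} of the Mumford formula and observing that the scalar $c=\frac{2^{2g}-1}{g(g+1)(4g^2-1)|B_{2g-2}|}$ is nonzero, one can divide to obtain
$$\tilde\al_g\cdot\la_{g-2}=0\;\in\; R^{2g-1}(M_{g,2g+2}^{rt}).$$

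The hard part is the third pass, lifting this relation to $\M_{g,2g+2}$. Let $\overline{\al}_g\in R^{g+1}(\M_{g,2g+2})$ be defined by the same polynomial in the tautological divisors on the Deligne-Mumford space; then $\overline{\al}_g\la_{g-2}$ restricts to zero on the open stratum $M_{g,2g+2}^{rt}$ and hence, by the excision sequence, is represented by cycles pushed forward from the complementary boundary $\partial=\M_{g,2g+2}\setminus M_{g,2g+2}^{rt}$. The main obstacle is to verify that $\la_{g-1}\la_g$ annihilates every such boundary contribution. This rests on two classical vanishings of lambda classes: Mumford's identity $\la_g|_{\M_{g,n}\setminus M_{g,n}^{ct}}=0$, coming from the trivial toric factor in the degenerate Jacobian, and the further vanishing of $\la_{g-1}\la_g$ on compact-type strata with more than one positive-genus component, obtained by splitting the Hodge bundle component-wise and checking that the resulting Chern class monomial vanishes on each stratum for degree reasons. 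These are precisely the inputs that make the socle evaluation $\int_{\M_{g,n}}(-)\cdot\la_{g-1}\la_g$ recalled in Section \ref{g+1} well-defined on $R^*(M_{g,n}^{rt})$. Once these are in hand, every boundary term in the expression for $\overline{\al}_g\la_{g-2}$ is killed upon multiplication by $\la_{g-1}\la_g$, yielding
$$\overline{\al}_g\cdot\la_{g-2}\la_{g-1}\la_g=0\;\in\; R^{4g-2}(\M_{g,2g+2}),$$
which is the asserted relation.
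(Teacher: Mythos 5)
Your proposal is correct and follows essentially the same route the paper sketches in the paragraphs surrounding the theorem: restrict the degree $g+1$ relation $\al_g=0$ to the hyperelliptic locus, trade $[\CH_g]$ for $c\,\la_{g-2}$ via Faber's formula \eqref{H}, and push the resulting relation from $M_{g,2g+2}^{rt}$ to $\M_{g,2g+2}$ using that $\la_{g-1}\la_g$ kills all classes supported on $\M_{g,2g+2}\setminus M_{g,2g+2}^{rt}$. You have merely made explicit the steps (projection formula, flat pullback of the hyperelliptic class, excision plus the two lambda-class vanishings) that the paper leaves implicit.
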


In \cite{Pi} Pixton proposed a class of conjectural tautological relations on the Deligne-Mumford space $\M_{g,n}$.
His relations are defined as a weighted sum over all stable graphs on the boundary $\partial M_{g,n}=\M_{g,n} \setminus M_{g,n}$.
Weights on graphs are tautological classes.
Pixton also conjectures that all tautological relations have this form.
A recent result \cite{PPZ} of Pandharipande, Pixton and Zvonkine shows that Pixton's relations are connected with the Witten's class on the space of 3-spin curves.
Their analysis shows that Pixton's relations hold in cohomology.
Janda \cite{J} derives Pixton's relations by applying the virtual localization formula to the moduli space of stable quotients introduced in \cite{MOP}.
This establishes Pixton's relations in Chow. 
Several known relations such as Keel's relation \cite{K} on $\M_{0,4}$, 
Getzler's relation \cite{G1} on $\M_{1,4}$ and Belorousski-Pandharipande relation \cite{BP} on $\M_{2,3}$ follow from Pixton's relations.
While this method produces a large class of relations there are very few evidences for Pixton's conjecture at this point.
The mysterious structure of the tautological algebras and the complicated nature of the relations makes his conjecture more delicate.
The question whether all relations in the tautological ring come from Pixton's relations seems interesting and challenging. 
The relations we described in this section give an example of an infinite collection of tautological relations. 
The connection between these relations and Pixton's relations is not clear to us. 
We can show that the relation on $M_{2,6}^{rt}$ already
comes from a stable quotient relation on $\M_{2,6}$.
Our computation is based on the notes \cite{JSQ} by Janda on stable quotient relations on products of the universal curve.
We don't know if it is possible to derive our relations from Pixton's relations in general.
Understanding their connection needs new ideas.

\section{Concluding remarks and further directions}

A conjecture of Faber and Pandharipande \cite{FP2} predicts that every relation in 
$R^*(M_{g,n}^{rt})$ and $R^*(M_{g,n}^{ct})$ can be extended to a tautological
relation in $R^*(\M_{g,n})$.
It is natural to ask whether tautological relations on $\CH_{g,n}^{rt}$ can be extended to 
tautological relations on $\CH_{g,n}^{ct}$ and $\overline{\CH}_{g,n}$.
As we have seen in Section \ref{g+1} the degree $g+1$ relation on $\CH_{g,2g+2}^{rt}$
can be naturally extended to the space of curves of compact type.
The reason is that the Abel-Jacobi map extends to curves of compact type and the vanishing of
$\theta^{g+1}$ holds over the universal Jacobian over $\CH_g^{ct}$.
The formula of Grushevsky and Zakharov gives an explicit formula for the boundary terms arising in
the extended relation.
Working with semi-abelian varieties and using the result of \cite{GZ2} 
should prove the existence of an extension of this relation to $\overline{\CH}_{g,n}$ as well.
Extending the Faber-Pandharipande and Gross-Schoen relations to the compactifications is more subtle.
A naive guess would be to obtain these vanishings from the degree $g+1$ relation.
One could start from this relation and push it forward to spaces with smaller numbers of points.
At each step we can multiply the resulting relation with a tautological class.
In genus two both of these relations follow from the vanishing of $\theta^3$ after suitable 
push-forwards. 
But in genus 3 one can only obtain the Faber-Pandharipande relation with this method.
In genus $g \geq 4$ none of these relations follow from this approach.
In fact if this method gives the vanishing of these cycles the same argument would prove the same
statement for \emph{every family} of curves with trivial kappa classes in positive degrees.
In particular, it would show their triviality for any fixed curve.
But according to \cite{Y1} the Faber-Pandharipande cycle does not vanish for a generic curve of genus $g \geq 4$.
We also know the non triviality of the Gorss-Schoen cycle for generic curves of 
genus $g \geq 3$.
This is shown by Ceresa \cite{CE} in characteristic zero and by Fakhruddin \cite{FA} in positive characteristic.

Finding a complete description of tautological relations on $\CH_{g,n}^{ct}$ and 
$\overline{\CH}_{g,n}$ seems an interesting question.
In \cite{PT} Petersen and Tommasi show the existence of a counterexample to the Gorenstein conjectures
for $\M_{2,n}$ for some $n \leq 20$.
A more recent result \cite{P2} of Petersen shows that the tautological ring of  
$M_{2,n}^{ct}$ is not Gorenstein when $n \geq 8$.
These results suggest that the structure of the tautological rings of the larger compactifications
become more complicated already in genus two.
It is not clear whether for a fixed genus all relations can be obtained from finitely many ones independent of $n$.

Another natural question concerns other loci of curves with special linear systems.
The next case is the space of trigonal curves.
In \cite{T3} we study the connection between the \emph{modified Gross-Schoen cycles} and 
Faber's relations in \cite{F2} on certain spaces of trigonal curves.

\bibliographystyle{amsplain}
\bibliography{mybibliography}
\end{document}